\newtheorem{Theorem}{Theorem}[section]
\newtheorem{Lemma}[Theorem]{Lemma}
\newtheorem{Proposition}[Theorem]{Proposition}
\newtheorem{Definition}[Theorem]{Definition}
\newtheorem*{Notation}{Notation}
\theoremstyle{definition}
\newtheorem{Remark}[Theorem]{Remark}
\newtheorem{Example}[Theorem]{Example}
\def\l{\left}
\def\r{\right}
\newcommand{\seq}[1]{\l\{#1\r\}}
\newcommand{\bra}[1]{\l(#1\r)}
\newcommand{\abs}[1]{\l|#1\r|}
\newcommand{\qv}[1]{\langle #1 \rangle}
\newcommand{\ip}[2]{\langle #1, #2 \rangle}
\def\norm#1{\left \Vert #1 \right \Vert}
\renewcommand{\hat}{\widehat}
\renewcommand{\bar}{\overline}
\def\gap{\; \; \;}
\newcommand{\EE}[1]{\mathbb{E}\ex{#1}}
\newcommand{\RR}{\mathbb{R}}
\def\id{\textbf{\textup{1}}}
\newcommand{\rp}[1]{\mathbb{{#1}}}
\newcommand{\Dd}{\mathcal{D}}
\newcommand{\pnorm}[2]{\norm{#1}_{#2\textup{-var;}[0,1]}}
\newcommand{\dhol}[1]{d_{#1\textup{-H\"{o}l;}[0,1]}}
\newcommand{\dvar}[1]{d_{#1\textup{-var;}[0,1]}}
\newcommand{\floor}[1]{\lfloor{#1}\rfloor}
\newcommand{\Mm}{\mathcal{M}}
\newcommand{\RP}[1]{\textbf{\textup{#1}}}
\def\EE{\mathbb{E}}
\def\Mm{\mathcal{M}}
\def\XX{\mathbb{X}}
\numberwithin{equation}{section}
\numberwithin{figure}{section}
\begin{document}
\title{Discretely sampled signals and the rough Hoff process}
\author{Guy Flint, Ben Hambly and Terry Lyons\\ \\ \small{\textit{ Mathematical Institute, University of Oxford, Woodstock Road, OX2 6GG, UK }}}
\date{\today}
\maketitle

\begin{changemargin}{0.3cm}{0.3cm} 
\abstract{
We introduce a canonical method for transforming a discrete sequential data set into an associated rough path made up of lead-lag increments. In particular, by sampling a $d$-dimensional continuous semimartingale $X:[0,1] \rightarrow \mathbb{R}^d$ at a set of times $D=\{t_i\}$, we construct a piecewise linear, axis-directed process $X^D: [0,1] \rightarrow
\mathbb{R}^{2d}$ comprised of a past and future component. 
We call such an object the Hoff process associated with the discrete data $\{X_{t}\}_{t_i\in D}$. The Hoff process can be lifted to its natural rough path enhancement and we consider the question of convergence as the sampling frequency increases. 
We prove that the It\^{o} integral can be recovered from a sequence of random ODEs driven by the components of $X^D$. This is in contrast to the usual Stratonovich integral limit suggested by the classical Wong-Zakai Theorem \cite{eugene1965relation}. 
Such random ODEs have a natural interpretation in the context of mathematical finance.}
\end{changemargin}

\smallskip
\smallskip
\noindent \textbf{Keywords.} {Rough path theory, lead-lag path, Hoff process, Wong-Zakai approximations, It\^{o}-Stratonovich correction}

\smallskip
\noindent \texttt{Guy.Flint@maths.ox.ac.uk}

\section{Discrete streams of event data}

A stream of event data is a time-ordered sequence of events where we have the time of the event as a real value and the data associated with the event typically taking some numerical value. 
In many applications, for example order books in finance or internet traffic, it is typical that these events come in a variety of quite different types at different frequencies, and that the realization of such streams can have a complex dependency structure between their coordinates. However, if we focus our interest in such event streams around their effects and the actions they trigger, then rough path theory provides an established and effective method (using the signature of the stream and the It\^{o} map) to transform these event streams into much smaller data sets, while retaining adequate information to accurately approximate the primary effects of the data (see \cite[\S1-5]{lyons2014rough}). The focus of rough path theory is to give quantitative mathematical meaning to the concept of the evolving response $y=(y_t)$ of a system $f$ driven by a signal $x=(x_t)$, which we usually write in the form of a rough differential equation:
\begin{equation}\label{e-rde}
dy_t = f\bra{y_t} dx_t, \gap y_0 = a.
\end{equation}
Here, $x$ can have a very complex local structure (that is, it can be a rough path). The theory shows how to summarise the data $x$ while retaining a good approximation of $y$. A reader familiar with differential equations might want to think about the case where $x$ is smooth but highly oscillatory on normal scales. The meaning of the differential equation (\ref{e-rde}) is not in doubt. Indeed, rough path theory tackles the question of how to define solutions by exploiting the smoothness of the system $f$. The theory summarises $x$ on normal scales so as to effectively capture its effects and predict $y$ without knowing or caring further about $x$ on microscopic scales. 

This paper introduces a canonical method of transforming discrete numerical event data $\{X_{t_i}\}_{t_i\in D} \subset \RR^d$ into a geometric $2$-rough path $\XX^D$ which we call a Hoff process. Importantly, our transformation takes into account the order of events and the effect of latency between different coordinates. Moreover, differential and integral equations driven by $\XX^D$ have a classical as well as a rough path meaning, and we can consider the question of convergence as our sampling rate becomes finer by exploiting the It\^{o} map from rough path theory. 

With this in mind, this paper examines the interplay between this way of looking at discrete data through the Hoff process in the special case of when the discrete event data comes from a multidimensional semimartingale signal. In effect, we prove the equivalent of the Wong-Zakai Theorem by demonstrating that, under certain regularity conditions, a sequence of Hoff processes is Cauchy in the standard $p$-variation rough path metric as the sampling frequency becomes finer. This is the content of the first main result of the paper (Theorem \ref{t-hoff-process-convergence}). Before introducing the second result, let us formally define a Hoff process in the next section. 

\section{Defining the Hoff process}\label{s-hoff-define}

This section defines the Hoff process as a method of transforming discrete numerical event data in $\RR^d$ into a geometric $2$-rough path.
Given a sample $\{X_{t_i}\}_{t_i\in D} \subset \RR^d$ at times $D=\{t_i\}_{i=0}^n \subset [0,1]$, the standard approach to transforming this into a continuous time trajectory is to define the c\`{a}dl\`{a}g, piecewise constant path $Z:[0,1]\to\RR^d$ by $Z_t = X_{t_i}$ for $t\in [t_{i},t_{i+1})$. Although this is a natural construction, the order of events across different coordinates is in some sense lost. Following the PhD thesis \cite{hoff2005brownian} of B. Hoff, we go one step further and consider the continuous process made up of lead and lag versions of this standard interpolation. We call the resultant path $X^D:[0,1]\to\RR^{2d}$ the Hoff process:

\begin{Definition}\label{d-hoff}
Let $D=\{t_i\}_{i=0}^n \subset [0,1]$ be a sequence of times with $(t_0,t_n)=(0,1)$. 
Define the path $\hat{X}_t : [0,2n]\to\RR^{2d}$ given by 
\begin{align*}
\hat{X}^D_{t}&=\bra{\hat{X}^{D,b}_t,\hat{X}^{D,f}_t}\\
:&=\begin{cases}
\left(X_{t_{k}},X_{t_{k+1}}\right) & \gap t\in[2k,2k+1);\\
\left(X_{t_{k}},X_{t_{k+1}}+2(t-(2k+1))\left(X_{t_{k+2}}-X_{t_{k+1}}\right)\right) & \gap t\in[2k+1,2k+\frac{3}{2});\\
\left(X_{t_{k}}+2(t-(2k+\frac{3}{2}))\left(X_{t_{k+1}}-X_{t_{k}}\right),X_{t_{k+2}}\right) & \gap t\in[2k+\frac{3}{2},2k+2);
\end{cases}
\end{align*}
with $\hat{X}^D_n=(X_{t_{n-1}},X_{t_n})$.
We define the Hoff process associated with the sequential data $\seq{X_{t_i}}_{t_i\in D}$ to be the path $X^D : [0,1]\to\RR^{2d}$ given by
\[
X^D_t  = (X^{D,b}_t,X^{D,f}_t):= \hat{X}^D_{2nt}. 
\]
The rough Hoff process $\XX^D$ is the $2$-rough path lift of $X^D$ (to be defined in Section \ref{s-rough-path}). 
\end{Definition}

We call $X^{D,b}$ and $X^{D,f}$ the \textit{lag} and \textit{lead} components of the Hoff process respectively. Since lead and lag share the same first letter, we use $b$ for \textit{backward} and $f$ for \textit{forward}. At this point we find it helpful in building intuition about $X^D$ to include a diagram of a possible trajectory (see Figure 2.1). 

\begin{figure}\label{f-hoff-greg}
  \centering
    \includegraphics[width=14cm]{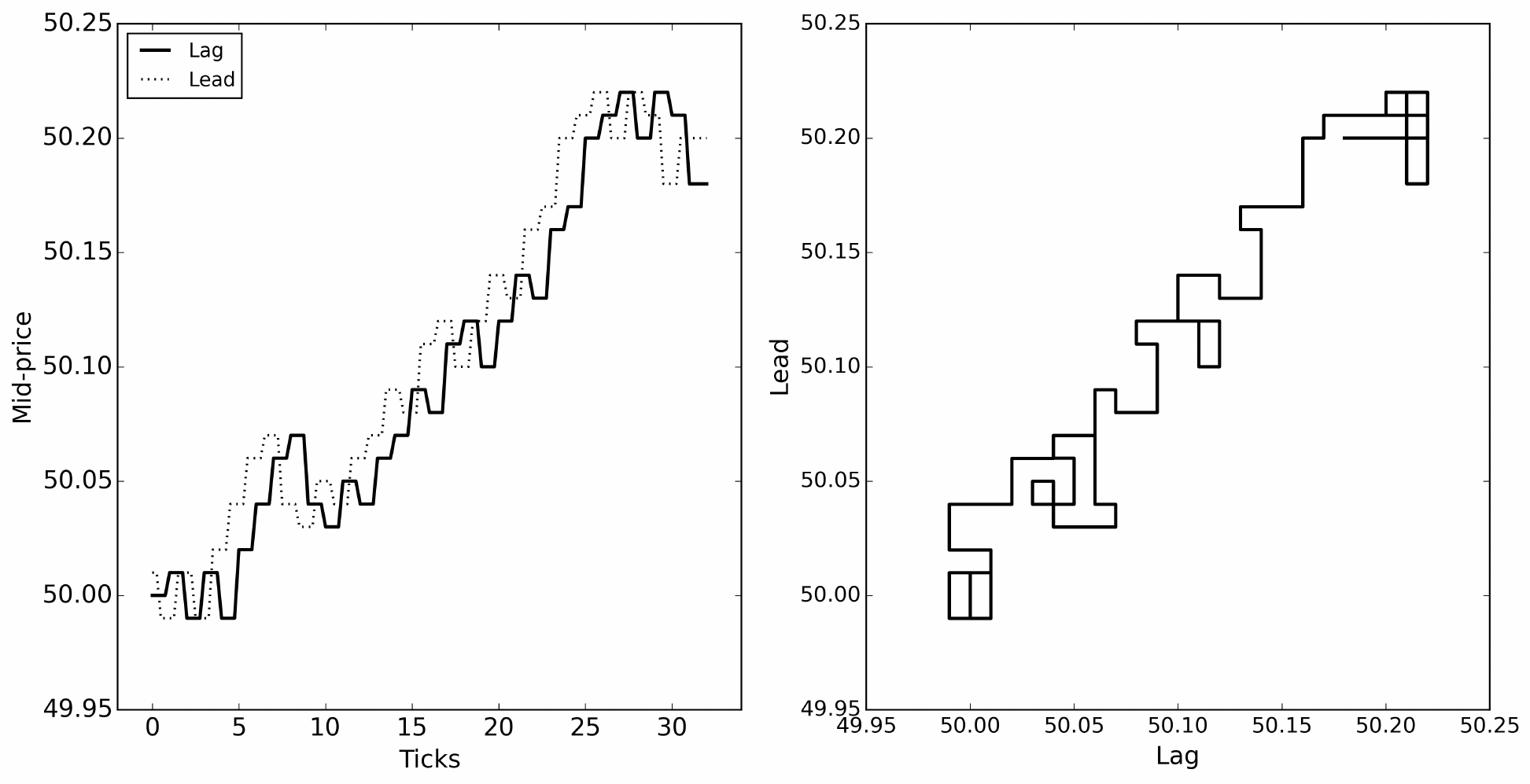}
      \caption{An example of a Hoff process trajectory}
\end{figure}

The definition states that $X_t=\bra{X_{t_k}, X_{t_{k+1}}}$ for $t\in [\frac{k}{n},\frac{2k+1}{2n})$, and on the intervals $[\frac{2k+1}{2n},\frac{k+1}{n})$, $X$ is a special interpolation between $(X_{t_{k}},X_{t_{k+1}})$ and $(X_{t_{k+1}}, X_{t_{k+2}})$. 
This interpolation is done by first updating the lead component, followed by the lag component.
We note that other lead-lag interpolations are possible (\cite{gergely2013extracting,learning,ni2015}) but we use Definition \ref{d-hoff} because of the following interpretation in the context of mathematical finance. 

Suppose one is  given the tick data of a collection of stocks $\{S_{t_i}\}_{i=0}^n \subset \RR^d$ at certain times $D=\{t_i\}_{i=0}^n$. For simplicity we assume that $(t_0,t_n)=(0,1)$.
Using the sequential data $\{X_{t_i}\}_{i=0}^n:=\{(t_i,S_{t_i})\}_{i=0}^n \subset [0,\infty) \times \RR^d \simeq \RR^{d+1}$, we could define the corresponding c\`{a}dl\`{a}g function $Z :t\in[0,1]\mapsto (Z^1_t,\ldots,Z^d_t)\in\RR^d$ as above. One could think of $Z$ as representing the current known price for the given stock collection. Similarly, we can think of the previsible process
\begin{equation}\label{e-previsible-z}
\tilde{Z}_t := \sum_{t_i \in D} Z_{t_i} \id{\seq{t\in (t_{i+1}, t_{i+2}]}},
\end{equation}
as recording the last known traded price for the collection. 
The performance of an agent's trading strategy could then be represented as the It\^{o} integral:
\begin{align}
Y^D:=
\int^1_0 f\bra{\tilde{Z}_{t}}\, dZ_t :&= \sum_{k=1}^d \int^1_0 f_k\bra{\tilde{Z}_{t}} \, dZ_t^k\notag\\
&= \sum^d_{k=1} \sum_{t_i \in D} f_k\bra{t_i, S_{t_i}}\bra{S^k_{t_{i+2}}-S^k_{t_{i+1}}},\label{e-agent}
\end{align}
where
for $f=(f_1,\ldots,f_d):\RR\times\RR^d\to\RR^d$. Here, $f_k(t_i,S_{t_i})$ represents the amount stock $k$ held by the agent at time $t_{i+1}$, given that they hold the portfolio $S_{t_i}$ at time $t_i$. Following the work of Friz in \cite{friz_examples}, we use (\ref{e-previsible-z}) instead of the well-known process $Z_{t-}=\sum_{t_i\in D} Z_{t_i}\id\{t \in (t_i,t_{i+1}]\}$ because we want the agent to act on the market at time $t_{i+1}$ based on their information at time $t_{i}$; there is a lag between receiving market information and acting upon it. Alternatively, one could interpret the model as representing a delay between placing an order on an exchange and the consequent change in the portfolio's value. Both events happen over disjoint time intervals. 

On the other hand, let $X^D\to\RR^{2(d+1)}$ denote the Hoff process associated with the same discrete data $\{X_{t_i}\}_{i=0}^n=\{(t_i,S_{t_i})\}_{i=0}^n\subset \RR^{d+1}$, with $X^{D,0}=(X^{D,b,0},X^{D,f,0})$ representing the time component. A simple computation gives
\begin{align*}
\sum_{t_i \in D} f_k\bra{t_i, S_{t_i}}\bra{S^k_{t_{i+2}}-S^k_{t_{i+1}}}
&=\sum_{i=0}^{n-2} f_k\bra{t_i, S_{t_i}}\bra{S^k_{t_{i+2}}-S^k_{t_{i+1}}}\\
&= \sum_{i=0}^{n-2} \int^{2k+\frac{3}{2}}_{2k+1} f_k\bra{\hat{X}^{D,b}_t}\, d\hat{X}^{D,f,k}_t\\
&= \sum^{n-2}_{i=0} \int^{2k+2}_{2k} f_k\bra{\hat{X}^{D,b}_t}\, d\hat{X}^{D,f,k}_t
= \int^1_0 f_k\bra{X^{D,b}_t}\, dX_t^{D,f,k}.
\end{align*}
That is, the performance of the agent's portfolio (\ref{e-agent}) can be rewritten as an \textit{exact integral} of the lag and lead components of the Hoff process. Importantly, since $X^D$ is continuous and certainly of finite variation, the last integral is well-defined as a classical Riemann-Stieltjes integral. Thus by using the particular interpolation scheme above, via the Hoff process, we are able to recast the simple forward Riemann sums of (\ref{e-agent}) as an exact integral of a piecewise linear path. 
We note in passing that an alternative pathwise integral in the context of mathematical finance via rough path theory can be found in the preprint \cite{perkowski2013pathwise} (the authors thank the referee for pointing out this paper). 

For the rest of this paper we will assume that the underlying event data arises from sampling a continuous semimartingale $X: t\in [0,1] \mapsto(X^1_t,\ldots,X^d_t) \in \RR^d$.
Then a natural question to ask is what does the integral $Y^D$ converge to as the sampling frequency increases; that is, as the mesh of our time partition $D$, $\abs{D}:=\max_{t_i\in D} \abs{t_{i+1}-t_i}$, goes to $0$. Such a question could arise in the situation where the agent has access to more data points, thus reducing the delay between the market behaviour and his strategy, (equivalently the delay between the forward and backward components of the Hoff process $X^D$). 

To answer this question, we use the theory of rough paths. Specifically, we lift the Hoff process $X^D$ to its natural rough path enhancement $\rp{X}^D$. 
The rough path lift $\rp{X}^D$, also known as the signature, is a well-defined transform in the sense than no information is lost (see \cite{hambly2005uniqueness}). The signed area enclosed by the different coordinates of $X^D$ is contained in $\rp{X}^D$. In particular, the area between the forward and backward components of $X^D$ is included and thus we have a natural candidate approximation for the quadratic variation of the original semimartingale $X$. Our first main result concerns the convergence of this area to this quadratic variation and the theorem more generally establishes the convergence of the rough Hoff process $\rp{X}^D$ to a rough path limit in the $p$-variation rough path topology as $\abs{D}\to 0$. As a corollary of this theorem, our second result proves that the limit of the random integrals $Y^D$ in (\ref{e-agent}) is the It\^{o} integral not the Stratonovich integral as might be expected from the classical Wong-Zakai Theorem (see the original paper \cite{eugene1965relation} by Wong and Zakai, or the rough path versions in \cite{coutin2005semi,sip1993}). This is almost an immediate consequence of the continuity of the It\^{o} map of rough path theory. The basic idea of the proof is that the areas between the forward and backward components of the Hoff process, (captured by $\rp{X}^D$), introduce a correction factor in the stochastic integral limit, consequently allowing us to recover the It\^{o}, not Stratonovich, integral. To be precise, Theorem \ref{t-ito-sde} implies the convergence
\begin{align*}
\sum_{k=1}^d \int^1_0 f_k\bra{X_t^{D,b}}\, dX_t^{D,f,k} \to 
\int^1_0 f\bra{X_t}\, dX_t
&:= \sum_{k=1}^d \int^1_0 f_k\bra{X_t}\, dX^k_t\\ 
&\textup{ as } \abs{D}:= \max_{t_i,t_{i+1}\in D} \abs{t_{i+1}-t_i} \to 0, 
\end{align*}
either in probability or some $L^p$-norm, (depending on the regularity imposed on $X$). 
We refer to \cite{friz2013physical,friz2009rough} and Theorem 17.20 of \cite{FV} for similar results concerning lead-lag driven random ODE convergence using rough path theory. 

\begin{Remark}
In a heuristic sense, this result demonstrates that the It\^{o} integral makes good sense as a classical integral of the Hoff process and does not need a probabilistic underpinning. 
As a consequence, we feel that despite the focus of the present paper in studying semimartingales as the underlying signal, our perspective and results undermine the concept to a significant extent. Taking mathematical finance as an example, almost all data presents itself initially in discrete form. The standard practice is to assume that the data arises from sampling a semimartingale signal so that It\^{o}'s theory of stochastic integration can be used to model hedging and investment strategies. It is identified in many contexts that this is a serious simplification. For example, momentum trading and long-range dependence may be better captured in a non-semimartingale context by modelling volatility using fractional Brownian motion with a Hurst parameter $H\neq\frac{1}{2}$ (\cite{bayer2015pricing,comte1998long,gatheral2014volatility}). We can now see that by moving the data to the Hoff process, now regarded as a geometric rough path, we can capture our data in a model that further allows the simulation of various strategies, thus enabling simplification and summary, without making any unrealistic assumptions about the nature of the data as a semimartingale. The pathwise concept of volatility is replaced by \textit{area} and is naturally included in any second order description of the Hoff process as a rough path. Successful modelling of financial data needs to understand the statistics of the first few terms in the signature of the Hoff process. In general these will be incompatible with any semimartingale model.
\end{Remark}

The recent papers \cite{gergely2013extracting,learning,lyons2014feature,ni2015} have also taken the approach of viewing time series data as samples of an underlying continuous process, which can in turn be lifted to a rough path. In \cite{gergely2013extracting} the authors have a very similar definition of a lead-lag path and apply standard regression analysis to linear functionals of the resultant rough path in order to extract interesting features of WTI crude oil futures, (such as deciding whether a given sample trade was made in the morning or afternoon).

\subsection*{Outline and notation of the paper}

We have attempted to make the paper as self-contained as possible. Before presenting the main results in Sections 3 and 4, we first use the next section to introduce the relevant elements of rough path theory with particular emphasis on rough differential equations. 
The remainder of the article deals with establishing auxiliary regularity results required for the proof of the main convergence theorem; in particular, the uniform convergence of the rough Hoff process lift and the maximal $p$-variation norm bounds. 

\begin{Notation}
\textup{Throughout the paper, $C_p$ denotes a deterministic constant dependent upon $p$, which may vary from line to line. Denote the set of all partitions of $[s,t]$ by $\Dd_{[s,t]}$. We assume that the continuous semimartingale $X=(X^1,\ldots,X^d)$ has the canonical decomposition $X=M+V$, where $M$ is a local martingale and $V$ is a process of bounded variation. We denote the quadratic variation of $X$ (or more precisely $M$) by setting $\qv{X}:=\{\qv{X^i,X^j} : i,j \in \seq{1,\ldots,d}\} : [0,1]\to\RR^{d\times d}$. Given $p\geq 1$, let $\Mm^p$ denote the class of $L^p$-bounded martingales with the semi-norm 
\[
\norm{M}_p := \abs{\sup_{t\in [0,1]} \norm{M_t-M_0}}_{L^p} = \EE\bra{\sup_{t\in [0,1]} \norm{M_t-M_0}^p}^{1/p}.
\]
Considering one-dimension $M^i$, the Burkholder-Davis-Gundy inequality \cite{burkholder1972integral} states that 
\[
c_p \norm{M^i}_p^p \leq \EE\bra{\qv{M^i}_{0,1}^{p/2}} 
\leq C_p \norm{M^i}_p^p
\]
for some constants $c_p<C_p$ dependent only on $p$. 
Moreover, for simplicity we assume that $X_0=0$. Since the signature of a path is invariant under translation, this assumption comes at no cost of generality.} 
\end{Notation}

\section{Rough path theory and notation}\label{s-rough-path}

In this section we provide a tailored overview of relevant rough path theory and take the opportunity to establish the notation we will need. For a more detailed overview of the theory we direct the reader to
\cite{friz2014course,FV,lejay2003introduction,lyons1994differential,lyons2004differential,lyons2002system} among many others.

{Rough path analysis} was introduced by T. Lyons in the seminal article \cite{lyons1994differential} and provides a method of constructing solutions to differential equations driven by paths that are not of bounded variation but have controlled {roughness}. The measure of this roughness is given by the $p$-variation of the path (see  (\ref{e-p-variation-def}) below). Since this paper works with semimartingales we only need to consider $p\in (2,3)$, but the theory is presented for general $p\geq 1$ because this involves no great additional effort. 

\subsection{Rough path overview}

Since this paper deals with continuous $\RR^d$-valued (and $\RR^{2d}$) paths on $[0,1]$, we restrict ourselves to the finite dimensional case (mainly adopting the notation found in \cite{FV}). Rough paths with values in infinite-dimensional Banach spaces can be defined \cite{lyons2002system}.
Denote the space of continuous paths $x:[0,1]\to\RR^d$ by $C\bra{[0,1],\RR^d}$. We write $x_{s,t}=x_t-x_s$ as a shorthand for the increments of $x\in C\bra{[0,1],\RR^d}$ and let $\norm{x}_\infty$ denote the uniform norm. For $p\geq 1$  define the $p$-variation semi-norm 
\[
	\norm{x}_{p\textup{-var};[0,1]} := \sup_{D=\{t_j\}\in\Dd_{[0,1]}}\bra{ \sum_{t_j\in D} \norm{x_{t_j, t_{j+1}}}^p}^{1/p}.
\]
Let us denote by $C^{p\textup{-var}}\bra{[0,1],\RR^d}$ the linear subspace of $C\bra{[0,1],\RR^d}$ consisting of paths of finite $p$-variation. 
In the case of $x\in C^{p\textup{-var}}\bra{[0,1],\RR^d}$ where  $p\in [1,2)$, the {iterated integrals} of $x$ are canonically defined via Young integration \cite{young1936inequality}. The collection of all these iterated integrals as an object in itself is called the \textit{signature} of the path, given by 
\[
	S(x)_{s,t} := 1 + \sum^\infty_{k=1} \int_{s<t_1<t_2<\ldots<t_k<t} dx_{t_1} \otimes dx_{t_2} \otimes \ldots \otimes dx_{t_k} \in \bigoplus^\infty_{k=0} (\RR^d)^{\otimes k}, 
\]
for all $(s,t) \in \Delta_{[0,1]}:= \seq{(s,t) : 0 \leq s \leq t \leq 1}$. 
With the convention that $(\RR^d)^{\otimes 0} := \RR$, we define the tensor algebras: 
\[
	T^{(\infty)}(\RR^d):= \bigoplus^\infty_{k=0} (\RR^d)^{\otimes k}, \gap\gap\gap T^{(n)}(\RR^d):= \bigoplus^n_{k=0} (\RR^d)^{\otimes k}.
\] 
Thus the signature takes values in $T^{(\infty)}(\RR^d)$. Defining the canonical projection mappings $\pi_n : T^{(\infty)}(\RR^d) \to T^{(n)}(\RR^d)$, we can also consider the \textit{truncated signature}:
\begin{align*}
	S_n(x)_{s,t} :&= \pi_n\bra{S(x)_{s,t}}
	= 1+ \sum^n_{k=1} \int_{s<t_1<t_2<\ldots<t_k<t} dx_{t_1} \otimes dx_{t_2} \otimes \ldots \otimes dx_{t_k} \in T^{(n)}(\RR^d). 
\end{align*}
Hence we can view $S_n$ as a continuous mapping from $\Delta_{[0,1]}$ to $T^{(n)}(\RR^d)$. 
Throughout this paper we will also reserve the notation $\pi_n$ for the canonical projection of $T^{(m)}(\RR^d)$ to $T^{(n)}(\RR^d)$ when $m>n$.

It is a well-known fact that the signature $S_n(x)$ takes values in the step-$n$ free nilpotent Lie group with $d$ generators, which we denote by $G^{(n)}(\RR^d)$. Indeed, defining the free nilpotent step-$N$ Lie algebra $\mathfrak{g}^{(n)}(\RR^d)$ by
\[
	\mathfrak{g}^{(n)}(\RR^d) := \RR^d \oplus [\RR^d,\RR^d] \oplus \ldots \oplus \underbrace{\l[ \RR^d, \l[ \ldots, \l[\RR^d, \RR^d \r] \r]  \r]}_{(n-1) \textup{ brackets}},
\]
and the natural exponential map $\exp_n : T^{(n)}(\RR^d) \to T^{(n)}(\RR^d)$ by 
\[
	\exp_n(a) = 1 + \sum^n_{k=1} \frac{a^{\otimes k}}{k!}, 
\]
we define $G^{(n)}(\RR^d) := \exp_n\bra{\mathfrak{g}^{(n)}(\RR^d)}$. 
The following characterization establishes the well-known fact (a proof can be found in \cite[Theorem 7.30]{FV}). 

\begin{Theorem}[\textbf{Chow-Rashevskii Theorem}]
	We have
	\[	
		G^{(n)}(\RR^d) = \seq{ S_n(x)_{0,1} : x\in C^{1\textup{-var}}\bra{[0,1],\RR^d}}. 
	\]	
\end{Theorem}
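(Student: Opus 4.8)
\emph{Proof sketch I would attempt.} The asserted equality comprises two inclusions; the plan is to dispatch the first by soft arguments and to extract the second --- the substantial one --- from a controllability theorem.

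\textbf{Signatures lie in $G^N(\RR^d)$.} First I would recall Chen's identity $S_N(x)_{s,u}\otimes S_N(x)_{u,t}=S_N(x)_{s,t}$, so that concatenation of $1$-variation paths corresponds to $\otimes$-multiplication of their truncated signatures. For a straight-line segment with increment $v\in\RR^d$ one computes directly $S_N(x)_{0,1}=\exp_N(v)\in\exp_N\bra{g^N(\RR^d)}=G^N(\RR^d)$. Since $\exp_N$ is a polynomial bijection of $1+\bigoplus_{k\ge1}(\RR^d)^{\otimes k}$ with polynomial inverse $\log_N$, and $g^N(\RR^d)$, being a nilpotent Lie algebra, is closed under the Baker--Campbell--Hausdorff product (a finite sum by nilpotency), the set $G^N(\RR^d)$ is a closed subgroup of $\bra{T^N(\RR^d),\otimes}$; together with Chen's identity this gives $S_N(x)_{0,1}\in G^N(\RR^d)$ for every piecewise-linear $x$. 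For general $x\in C^{1\textup{-var}}\bra{[0,1],\RR^d}$ I would approximate by the piecewise-linear interpolations $x^{(k)}$ along partitions of vanishing mesh: these converge to $x$ uniformly with uniformly bounded $1$-variation, hence $S_N(x^{(k)})_{0,1}\to S_N(x)_{0,1}$ by the standard stability of iterated integrals under piecewise-linear approximation, and closedness of $G^N(\RR^d)$ closes the inclusion. (Alternatively one bypasses the approximation by verifying the shuffle relations for $S_N(x)_{0,1}$ and invoking that the group-like elements of $T^N(\RR^d)$ are precisely $G^N(\RR^d)$.)

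\textbf{Every element of $G^N(\RR^d)$ is a signature.} This is the heart of the matter, and I would deduce it from the Chow--Rashevskii theorem. Regard $G^N(\RR^d)$ as a connected, simply connected nilpotent Lie group --- diffeomorphic via $\exp_N$ to the vector space $g^N(\RR^d)$, which is its Lie algebra --- and let $U_1,\dots,U_d$ be the left-invariant vector fields with $U_i(1)=e_i\in\RR^d\subseteq g^N(\RR^d)$, explicitly $U_i(h)=h\otimes e_i$. By the very definition of $g^N(\RR^d)$ as the span of all iterated brackets of $\RR^d$, the family $U_1,\dots,U_d$ is bracket-generating at every point (H\"ormander's condition). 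Fix $g\in G^N(\RR^d)$. By Chow--Rashevskii there is an absolutely continuous horizontal curve $\gamma:[0,1]\to G^N(\RR^d)$ with $\gamma(0)=1$, $\gamma(1)=g$ and $\dot\gamma(t)=\sum_{i=1}^d u_i(t)\,U_i(\gamma(t))$ for a.e.\ $t$, with $u\in L^1\bra{[0,1],\RR^d}$. Putting $x_t:=\int_0^t u(s)\,ds$, one has $x\in C^{1\textup{-var}}\bra{[0,1],\RR^d}$ with $x_0=0$, and the path $t\mapsto S_N(x)_{0,t}$ satisfies the same Carath\'eodory ODE $\frac{d}{dt}S_N(x)_{0,t}=S_N(x)_{0,t}\otimes\dot x_t=\sum_i u_i(t)\,U_i\bra{S_N(x)_{0,t}}$ from the same initial value $1$; by uniqueness $S_N(x)_{0,\cdot}\equiv\gamma$, whence $S_N(x)_{0,1}=g$.

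\textbf{Main obstacle, and an elementary alternative.} The delicate step is precisely this upgrade of the algebraic fact ``$\RR^d$ generates $g^N(\RR^d)$'' to the geometric statement that any two points of $G^N(\RR^d)$ are joined by a curve tangent to $\mathrm{span}(U_1,\dots,U_d)$, i.e.\ the Chow--Rashevskii theorem. To keep the argument self-contained one can avoid sub-Riemannian machinery as follows: set $H:=\seq{S_N(x)_{0,1}:x\in C^{1\textup{-var}}\bra{[0,1],\RR^d}}$, observe that $H$ is a subgroup of $G^N(\RR^d)$ (identity from the constant path, products from Chen's identity, inverses from time reversal) invariant under the dilations $\delta_\lambda$, and prove by induction on bracket length that $\exp_N(w)\in H$ for every free Lie monomial $w$: the group commutator of two exponentials in $H$ again lies in $H$ and, by Baker--Campbell--Hausdorff, equals $\exp_N$ of the desired bracket plus corrections of strictly higher homogeneous degree, which are stripped off by an inner induction on degree combined with $\delta_\lambda$-rescaling. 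Since a basis of $g^N(\RR^d)$ consists of such monomials, $H$ then contains a ``second-kind coordinates'' neighbourhood of $1$, hence is open, hence equals the connected group $G^N(\RR^d)$.
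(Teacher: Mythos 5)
Your argument is essentially correct, but note that the paper does not prove this statement at all: it is quoted as a known fact with a pointer to \cite[Theorem 7.30]{FV}, so there is no in-paper proof to compare against. What you propose is a genuine, self-contained proof, and both of your routes are standard. The inclusion $\seq{S_N(x)_{0,1}} \subseteq G^N(\RR^d)$ via Chen's identity, the computation $S_N(\text{segment})_{0,1}=\exp_N(v)$, closure of $G^N(\RR^d)=\exp_N\bra{g^N(\RR^d)}$ under $\otimes$ by Baker--Campbell--Hausdorff, and passage to general $x\in C^{1\textup{-var}}\bra{[0,1],\RR^d}$ by piecewise-linear approximation (uniform convergence plus a uniform $1$-variation bound suffices for convergence of the level-$\le N$ iterated integrals, and $G^N(\RR^d)$ is closed since $\exp_N$ and $\log_N$ are polynomial) is sound; just be aware that the paper's displayed definition of $g^N(\RR^d)$ is shorthand for the full graded sum $\RR^d\oplus[\RR^d,\RR^d]\oplus\cdots$, which is what you implicitly use. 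For surjectivity, your Chow--Rashevskii argument is correct: the left-invariant fields $U_i(h)=h\otimes e_i$ are bracket-generating precisely because $\RR^d$ Lie-generates $g^N(\RR^d)$, the control $u\in L^1$ integrates to a path of finite $1$-variation, and uniqueness for the linear Carath\'eodory ODE $\dot S=S\otimes \dot x$ identifies $S_N(x)_{0,\cdot}$ with the horizontal curve. The trade-off is that this imports sub-Riemannian machinery at least as heavy as the statement being proved; your ``elementary alternative'' (the subgroup $H$, dilation invariance, group commutators plus BCH, induction on bracket length and homogeneous degree, then openness via coordinates of the second kind and connectedness) is closer in spirit to the proof in Friz--Victoir and is the version I would flesh out, since the step ``corrections of higher degree are stripped off'' is exactly where the real work lies and deserves a careful inner induction (using $H=H^{-1}$ to reach negative multiples of even-degree monomials, as you note via inverses). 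Either route, written out, would legitimately replace the citation.
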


More generally, given $p\geq 1$, we set $\floor{p}$ to be the smallest integer less than or equal to $p$  and consider the set of group-valued paths 
\[
	\RP{x}_t = \bra{1,\RP{x}^1_t,\ldots,\RP{x}^{\floor{p}}_t} \in G^{(\floor{p})}(\RR^d). 
\]
Importantly, the group structure provides a natural non-commutative notion of increment: $\RP{x}_{s,t} := \RP{x}_{s}^{-1} \otimes \RP{x}_{t}$.
This multiplication operation is well-defined by Chen's Theorem \cite[Theorem 2.9]{lyons2004differential}. 

\begin{Example}\label{ex-path}
	Take a path $x\in C^{1\textup{-var}}\bra{[0,1],\RR^d}$ and set $\RP{x}:=S_2(x)\in C\bra{[0,1],G^{(2)}(\RR^d)}$ for its level-$2$ signature, which we call the ({level-}$2$) {rough path lift} of $x$. 
	From the definition of the exponential map it is immediate that
		\[
		\RP{x}_{s,t}= 1+ x_{s,t} + \frac{1}{2}x_{s,t}\otimes x_{s,t} + A_{s,t} =\exp_2\bra{x_{s,t}+A_{s,t}},
	\]
	where $A:\Delta_{[0,1]}\to\l[\RR^d,\RR^d\r]$ is the L\'{e}vy area process of $x$. Note that we have used the truncated exponential notation defined above.  
\end{Example}

We can describe the set of \textit{norms} on $G^{(\floor{p})}(\RR^d)$ which are homogeneous with respect to the natural dilation operation on the tensor algebra (see \cite{FV} for more definitions and details). 
The subset of these so-called homogeneous norms which are symmetric and sub-additive gives rise to genuine metrics on $G^{(\floor{p})}(\RR^d)$. We work with the Carnot-Carath\'{e}odory norm on $G^{(n)}(\RR^d)$ given by
\[
	\norm{g}_C := \inf\seq{ \int^1_0 \abs{d\gamma} : \gamma \in C^{1\textup{-var}}\bra{[0,1],\RR^d} \text{ and } S_n(\gamma)_{0,1}=g},
\]
which is well-defined  by the Chow-Rashevskii theorem.
This in turn gives rise to the notion of a homogeneous metric on $G^{(n)}(\RR^d)$:
\[
	d\bra{g,h} = \norm{g^{-1}\otimes h}_C. 
\] 
In fact, recalling the path $x$ from Example \ref{ex-path}, it can be shown that there exists a constants $c,C$ (dependent only on $d$) such that 
\begin{equation}\label{e-area-cc}
c \norm{S_2(x)_{s,t}}_C \leq \norm{x_{s,t}} \vee \sqrt{\norm{A_{s,t}}_{(\RR^d)^{\otimes 2}}} \leq C \norm{S_2(x)_{s,t}}_C.
\end{equation}
We will use this equivalence throughout the paper. 
Moreover, this metric gives rise to the following $p$-variation and $\alpha$-H\"{o}lder metrics on the set of $G^{(\floor{p})}(\RR^d)$-valued paths:
\begin{align}
	\dvar{p}\bra{\RP{x},\RP{y}} :&=  \sup_{D=\{t_j\}\in \Dd_{[0,1]}} \bra{\sum_{t_j \in D} d\bra{\RP{x}_{t_j,t_{j+1}}, \RP{y}_{t_j,t_{j+1}}}^p}^{1/p}\label{e-p-variation-def},\\
	\dhol{1/p}\bra{\RP{x},\RP{y}} :&= \sup_{0\leq s < t \leq 1} \frac{d\bra{\RP{x}_{s,t}, \RP{y}_{s,t}}}{\abs{t-s}^{1/p}}\notag.
\end{align}
We also define the metrics 
\[
	d_{0\textup{;}[0,1]}(\RP{x},\RP{y}) := \sup_{0 \leq s < t \leq 1} d\bra{\RP{x}_{s,t}, \RP{y}_{s,t}}, \gap\gap\gap d_{\infty\textup{;}[0,1]}(\RP{x},\RP{y}) := \sup_{t\in [0,1]} d\bra{\RP{x}_{t},\RP{y}_t}. 
\]
If no confusion may arise we will often drop the $[0,1]$ appearance in these metrics. 
The norms induced by these metrics are denoted by $\norm{\cdot}_{p\textup{-var;}[0,1]}$ and $\norm{\cdot}_{1/p\textup{-H\"{o}l;}[0,1]}$ respectively. 

 
The space of \textit{weakly geometric} $p$-rough paths, denoted by $WG\Omega_p(\RR^d)$, is the  set of paths with values in $G^{(\floor{p})}(\RR^d)$ such that (\ref{e-p-variation-def}) is finite. A refinement of this notion is the space of \textit{geometric} $p$-rough paths, denoted $G\Omega_p(\RR^d)$, which is the closure of 
\[
	\seq{S_{\floor{p}}(x) : x\in C^{1\textup{-var}}\bra{[0,1],\RR^d}},
\]
with respect to the topology induced by the rough path metric $\dvar{p}$. Certainly we have the inclusion $G\Omega_p(\RR^d) \subset WG\Omega_p(\RR^d)$ and it turns out that this inclusion is strict (see \cite[\S3.2.2]{lyons2004differential}). 
	
This paper is concerned with semimartingales, which almost surely have finite $p$-variation for all $p\in (2,3)$ (\cite[Theorem 14.9]{FV}). Thus $\floor{p}=2$ and so we are dealing with $p$-rough paths in the step-$2$ group $G^{(2)}(\RR^d)$. Given a stochastic process $X$, denote its corresponding rough path lift as $\rp{X}$ (as opposed to the rough path lift of $x$ denoted by $\RP{x}$). 

\subsection{Rough differential equations}

In this subsection we introduce rough differential equations. For the more technical topic of rough differential equations {with drift} we refer to the exhaustive \cite[Chapter 12]{FV}. The theory quoted here can be found in greater detail in \cite[Chapter 10]{FV}. 

For now, let $x\in C^{1\textup{-var}}\bra{[0,1],\RR^d}$ and $V=\{V_k\}_{k=1}^d$ be a collection of vector fields $V_k : \RR^q\to\RR^q$.
We denote the solution $y$ of the (controlled) ordinary differential equation (ODE)
\begin{equation}\label{e-ode}
	dy=V(y)\, dx := \sum^d_{k=1} V_k(y)\, dx^k, \gap\gap y_0 \in \RR^q,
\end{equation}
by $\pi_{(V)}\bra{0;y_0,x}$. The notation $\pi_{(V)}\bra{s,y_s;x}$ stands for solutions of (\ref{e-ode}) started at time $s$ from a point $y_s\in\RR^q$. 

\begin{Definition}[\textbf{RDE}]
	Let $\RP{x}\in WG\Omega_p(\RR^d)$ for some $p\geq 1$. We say that $y\in C\bra{[0,1],\RR^q}$ is a solution to the {rough differential equation} (shorthand: {RDE solution}) driven by $\RP{x}$ along the collection of $\RR^q$-vector fields $V=\{V_k\}_{k=1,\ldots,d}$ and started at $y_0$ if there exists a sequence $(x_n)_n$ in $C^{1\textup{-var}}\bra{[0,1],\RR^d}$ such that:
	\begin{enumerate}[label=\textit{(\roman{enumi})}, ref=(\roman{enumi})]\addtolength{\itemsep}{0.4\baselineskip}
		\item $\lim_{n\to\infty} d_{0;[0,1]}\bra{S_{\floor{p}}(x_n),\RP{x}}=0$;
		\item $\sup_n \pnorm{S_{\floor{p}}\bra{x_n}}{p}<\infty$;
	\end{enumerate}
	and ordinary differential equations (ODE) solutions $y_n=\pi_{(V)}\bra{0,y_0;x_n}$ such that 
	\[
		y_n \to y \text{ uniformly on } [0,1] \text{ as } n\to\infty.
	\]
	We denote this situation with the (formal) equation:
	\[
		dy=V(y)\, d\RP{x},\gap\gap y_0\in \RR^q,
	\]
	which we refer to as a rough differential equation. 
\end{Definition}

	The RDE solution map
	\[
		\RP{x} \in C^{p\textup{-var}}\bra{[0,1],G^{(\floor{p})}(\RR^d)} \mapsto y=\pi_{(V)}\bra{0,y_0;\RP{x}} \in C\bra{[0,1],\RR^q}
	\]
	is known in the rough path literature as the {It\^{o} map}.

Considerations of existence and uniqueness of RDEs show that the appropriate way to measure the regularity of the vector field collection $V$ is the notion of $\gamma$-Lipschitzness (denoted $\textup{Lip}^\gamma$) in the sense of E.M. Stein \cite{stein1970singular}. See \cite{FV,lyons2004differential} for a discussion on the contrast with the classical notion of Lipschitzness. The following result is taken from \cite[Theorem 10.26]{FV}.  

\begin{Theorem}
Let $p\geq 1$ and $\RP{x} \in WG\Omega_p(\RR^d)$. If $V=\{V_k\}_{k=1}^d \subset \textup{Lip}^\gamma(\RR^q)$ for some $\gamma>p$ then the RDE 
\[
dy = V(y)\, d\RP{x}
\]
has a unique solution for each initial point $y_0 \in \RR^q$. 
\end{Theorem}

\section{Rough Hoff process convergence}\label{s-main}

This section presents the first main result of the paper. As in Section \ref{s-hoff-define}, we sample a continuous semimartingale $X$ at a sequence of refining partitions $\{D_n\}$ and construct the corresponding sequence of Hoff processes $\{X^{D_n}\}$.
In Theorem \ref{t-hoff-process-convergence} we establish the convergence of the sequence of rough path lifts $\rp{X} ^{D_n}$ of $X^{D_n}$ to a rough path limit under the $p$-variation rough path topology. 

\subsection{Parameterization considerations}

Although $p$-variation estimates and the limit of the Hoff processes are unchanged by reparameterization, the relative parameterization of the Hoff process with respect to the original signal is important when making precise statements about limits and convergence. Certainly the original signal is not invariant to reparameterization. To elaborate further, suppose we have a partition $D_n:=\{t^n_i\}_{i=0}^n\subset[0,1]$ and a sampled signal $X:[0,1]\to\RR^d$. The corresponding Hoff process $X^{D_n}=(X^{D_n,b},X^{D_n,f})$ is not parameterized with respect to the real time of the signal but rather an artificial time; for example, we cannot say how close $X^{D_n,b}_{t^n_k}$ or $X^{D_n,f}_{t^n_k}$ are to the sampled signal point $X_{t^n_k}$ for each $t^n_k\in D_n$. Therefore we need a precise method of comparing the Hoff process to the original signal. To this end, for each partition $D_n$ we define the corresponding time change $\tau_n:[0,1]\to[0,1]$ between artificial and real time by setting $\tau_n\bra{\frac{2k}{2n}}=t^n_k$ and $\tau_n\bra{\frac{2k+1}{2n}}=t^n_{k+1}$,  with piecewise linear interpolation in between. We provide a simple example of the comparison between the original signal and the Hoff process provided by the time change $\tau_n$ in Figure \ref{f-tau-time}, (where we assume $d=1$ for simplicity).  


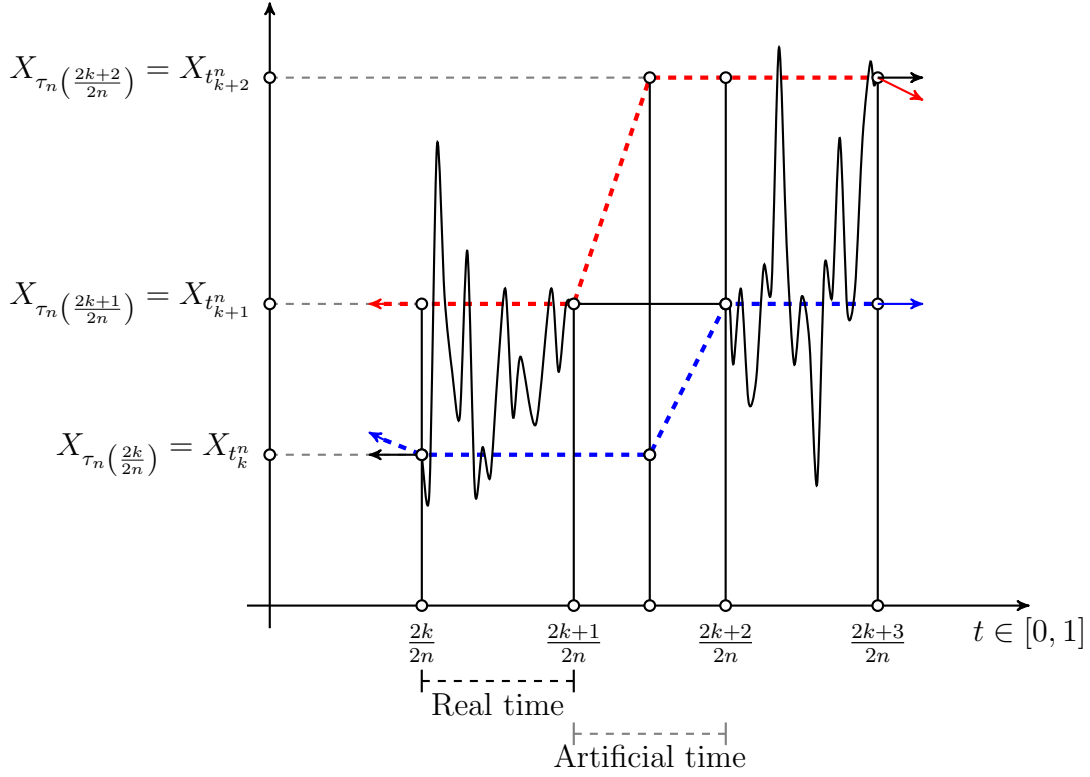
\begin{figure}\label{f-tau-time}
\centering
\usetikzlibrary{arrows,intersections}
\begin{tikzpicture}[
    thick,
    >=stealth',
    dot/.style = {
      draw,
      fill = white,
      circle,
      inner sep = 0pt,
      minimum size = 4pt
    }
  ]
  \coordinate (O) at (0,0);
  \draw[->] (-0.3,0) -- (10,0) coordinate[label = {below:$t\in [0,1]$}] (xmax);
  \draw[->] (0,-0.3) -- (0,8) coordinate[label = {right:$$}] (ymax);
    
  \path[name path=lag] (2,2) -- (5,2) -- (6,4) -- (6.5,4);
  \path[name path=lead] (1,2) -- (2,2) -- (2.5,3) -- (3,3);
  \path[name path=signal] plot[smooth] coordinates 
		{(2,2) (2.1, 1.5) (2.2,6.1) (2.3,4.1) (2.4,3.1) (2.5,2.5) (2.6,4.2) (2.7, 1.5) (2.8,2.1) (2.9, 1.7) (3.0, 3.1) (3.1,4.2) (3.2, 2.1) (3.3, 4.2) (3.5, 2.4) (3.7, 4.2) (3.8, 3.1) (3.9,4) (4,4) (4,4) (4,4) (4,4) (4.1,4) (4.1,4) (5,4) (6,4) (6,4) (6,4) (6.05,4) (6.1,3.2) (6.2,2.7) (6.3,4.5)};
  
  \scope[name intersections = {of = lag and signal, name=X}]
  	 \draw[gray!100, dashed] (0,2) -- (2,2);
	 \draw[gray!100, dashed] (0,4) -- (2,4);
	 \draw[gray!100, dashed] (0,7) -- (5,7);

	\draw[ultra thick, blue,dashed] (1.5,2.2) -- (2,2) -- (5,2) -- (6,4) -- (8,4);
	\draw[ultra thick, red,dashed] (1.5, 4) -- (2,4) -- (4,4) -- (5,7) -- (8,7);
	\draw plot[smooth] coordinates 
		{(1.5, 2) (1.9,2) (2,2) (2,2) (2,2) (2.1, 1.5) (2.2,6.1) (2.3,4.1) (2.4,3.1) (2.5,2.5) (2.6,4.7) (2.7, 1.5) (2.8,2.1) (2.9, 1.7) (3.0, 3.1) (3.1,4.2) (3.2, 2.5) (3.3, 3.3) (3.5, 2.4) (3.7, 4.2) (3.8, 3.1) (3.9,4) (4,4) (4,4) (4,4) (4,4) (4.1,4) (4.1,4) (5,4) (6,4) (6,4) (6,4) (6.05,4) (6.1,3.2) (6.2,4.2) (6.3,2.7) (6.4, 3.0) (6.5,4.5) (6.6,4.2) (6.7, 7.4) (6.8, 4.8) (6.9,3.2) (7.0,4.1) (7.1,3.5) (7.2, 1.6) (7.3, 4.5) (7.4,4.1) (7.5, 6.2) (7.6, 3.9) (7.7, 4.2) (7.8, 6.2) (7.9, 7.2) (7.95,6.9) (8,7) (8,7) (8,7)};
	\draw (X-1) node[dot, label = {above:$$}] (2,5) {} -- node[left]
      {$$} (X-1 |- O) node[dot, label = {below:$\frac{2k}{2n}$}] {};
      	\draw (X-7) node[dot, label = {left:$$}] (2,5) {} -- node[left]
      {$$} (X-7 |- O) node[dot, label = {below:$\frac{2k+2}{2n}$}] {};
      
  \endscope
  
  \scope
  \draw (2,4) node[dot, label={below:$$}] (2,5) {} -- node[left] {$$}  (2,2) node[dot] {};
  \draw (4,0) node[dot, label={below:$\frac{2k+1}{2n}$}] (2,5) {} --  node[left] {$$} (4,4) node[dot] {};
  \draw (6,7) node[dot, label={below:$$}] (2,5) {} --  node[left] {$$} (6,4) node[dot] {};
  \draw (5,0) node[dot, label={above:$$}] (2,5) {} -- node[left] {$$} (5,2) node[dot] {};
  \draw (5,7) node[dot, label={above:$$}] (2,5) {} -- node[left] {$$} (5,2) node[dot] {$$};
  \draw (0,2) node[dot, label={left:$X_{\tau_n\bra{\frac{2k}{2n}}}=X_{t^n_k}$}] {};
  \draw (0,4) node[dot, label={left:$X_{\tau_n\bra{\frac{2k+1}{2n}}}=X_{t^n_{k+1}}$}] {};
  \draw (0,7) node[dot, label={left:$X_{\tau_n\bra{\frac{2k+2}{2n}}}=X_{t^n_{k+2}}$}] {};
  
  \draw (8,7) node[dot] {} (8,7) {} -- (8,4);
  \draw (8,4) node[dot] {} (8,4) {} -- (8,0);
  \draw (8,0) node[dot, label={below:$\frac{2k+3}{2n}$}] {};
  
  \draw[black,dashed] (2, -1) -- (4,-1);
  \draw[black] (2,-0.85) -- (2,-1.15);
  \draw[black] (4,-0.85) -- (4,-1.15);
  \draw[gray,dashed] (4,-1.7) -- (6,-1.7);
  \draw[gray] (4,-1.55) -- (4,-1.85);
  \draw[gray] (6,-1.55) -- (6,-1.85);
  
  \path  (2,-1) -- node[below] {Real time} (4,-1);
  \path  (4,-1.7) -- node[below] {Artificial time} (6,-1.7);
  
  \draw[red,->] (8,7) -- (8.6,6.7);
  \draw[red,->] (1.5,4) -- (1.3,4);
  \draw[blue,->] (1.5,2.2) -- (1.3,2.3);
  \draw[blue,->] (8,4) -- (8.6,4);
  \draw[black,->] (1.5,2) -- (1.3,2);
  \draw[black,->] (8,7) -- (8.6,7);
  
  
  \endscope
  	
\end{tikzpicture}
\caption{Parameterization of the Hoff process $X^{D_n}=(X^{D_n,b},X^{D_n,f})$ with respect to the original signal $(X_{\tau_n(t)})_{t\in [0,1]}$ (in black), now in terms of $\tau_n$. The lag and lead paths are in dashed blue and red lines respectively.}
\end{figure}

Since $\tau_n$ is an increasing (continuous) function, we have no trouble in defining its generalized inverse as $\tau_n^{-1}(u) := \inf\seq{t\geq 0: \tau_n(t)\geq u}$.
As intended, this time change allows us to relate the Hoff process back to the original signal:
\[
X^{D_n}_{\frac{2k+1}{2n}}  = X^{D_n}_{\tau_n^{-1}(t^n_k)} = (X_{t^n_k},X_{t^n_{k+2}}) \,\,\text{ and }\,\, X_{\tau_n\bra{\frac{2k}{2n}}} = X_{t^n_k}, \gap X_{\tau_n\bra{\frac{2k+1}{2n}}}=X_{t^n_{k+1}}.
\]
A similar parameterization using artificial time was previously defined in William's paper \cite{williams2001path} in the context of extending rough path theory to discontinuous L\'{e}vy processes (cf. \cite[Definition 1.3]{williams2001path}). 

\subsection{Rough Hoff convergence}

We present both of the two main theorems of this paper in terms of these time changes $\tau_n$. 

\begin{Theorem}[\textbf{Rough Hoff process convergence}]\label{t-hoff-process-convergence}
	Let $X=M+V:[0,1]\to\RR^d$ be a continuous semimartingale with L\'{e}vy area $A$, and define the associated Hoff processes $X^{D_n}$ corresponding to the collection of sampling times $\{D_n\}_{n=1}^\infty\subset \Dd_{[0,1]}$. Then for all $p>2$ we have
	\[
		\dvar{p}\bra{\rp{X}^{D_n}, \rp{X}^\infty_{\tau_n(\cdot)}} \to 0 \text{ in probability as } \abs{D_n}\to 0,
	\]
	where $\rp{X}^\infty \in C\bra{[0,1],G^{(2)}(\RR^{2d})}$ is defined by
\[
	\rp{X}^\infty_{s,t} = \exp_2\bra{
	\left(\begin{array}{c}
X_{s,t}\\
X_{s,t}
\end{array}\right) +  
	\left(\begin{array}{cc}
A_{s,t} & A_{s,t}-\frac{1}{2}\qv{X}_{s,t}\\
A_{s,t}+\frac{1}{2}\qv{X}_{s,t} & A_{s,t}
\end{array}\right)
	}.
\]
	If in addition $V$ is bounded in $L^r$ for some $r\geq 1$ and $M\in\Mm^p$, then the convergence also holds in the $L^{p\wedge r}$-norm.  
\end{Theorem}
\begin{proof}
Throughout the proof we refer to technical results in Sections \ref{s-uniform} and \ref{s-maximal}. 
	First we prove the theorem for $V=0$. Assuming $M\in\Mm^p \subseteq \Mm^q$, where $q\in (2,p)$, it follows from Proposition \ref{p-maximal-p-variation-bound} that the rough path collection $\seq{\rp{M}^{D_n}}_{n=1}^\infty$ satisfies uniform $q$-variation bounds in $L^q$:
	\begin{equation}\label{e-main-hoff-maximal}
	\sup_{n} \abs{ \norm{\rp{M}^{D_n}}_{q\textup{-var;}[0,1] }}_{L^q} = C_q < \infty.
	\end{equation}
	Moreover, since each $\rp{M}_{\tau_n(\cdot)}$ is simply a reparameterization of the rough path lift of the original martingale $S_2(\rp{M})$, by \cite[Proposition 14.9]{FV} we have 
	\[
	\sup_n\abs{ \norm{ \rp{M}_{\tau_n(\cdot)}}_{q\textup{-var;}[0,1]}}_{L^q} =\abs{ \norm{ \rp{M}}_{q\textup{-var;}[0,1]}}_{L^q} = C_q < \infty.
	\]	
	Recalling the interpolation result of \cite[Proposition 12]{friz2006note}, for $2<q<p$ it follows that
	\begin{align*}
	d_{p\textup{-var};[0,1]}\bra{\rp{M}^{D_n}, \rp{M}^\infty_{\tau_n(\cdot)}}
	&\leq d_{\infty;[0,1]}\bra{\rp{M}^{D_n}, \rp{M}^\infty_{\tau_n(\cdot)}}^{1-q/p}\cdot d_{q\textup{-var;}[0,1]}\bra{\rp{M}^{D_n}, \rp{M}^\infty_{\tau_n(\cdot)}}^{q/p}\\
	&\leq d_{\infty;[0,1]}\bra{\rp{M}^{D_n}, \rp{M}^\infty_{\tau_n(\cdot)}}^{1-q/p}\cdot \bra{ \norm{\rp{M}^{D_n}}_{q\textup{-var;}[0,1] } + \norm{\rp{M}_{\tau_n(\cdot)}}_{q\textup{-var;}[0,1] }}^{q/p}.
	\end{align*}
	Using H\"{o}lder's inequality with $1/r+1/s=1$ satisfying $r=\frac{p}{p-q}>1$ and $s=\frac{p}{q}\in (1,3/2)$, we combine the maximal $q$-variation bounds (\ref{e-main-hoff-maximal}) with the interpolation inequality to find that 
	\[
	\abs{ 
	d_{p\textup{-var};[0,1]}\bra{\rp{M}^{D_n}, \rp{M}^\infty_{\tau_n(\cdot)}}
	}_{L^p} \leq C_{p,q} \abs{d_{\infty;[0,1]}\bra{\rp{M}^{D_n}, \rp{M}^\infty_{\tau_n(\cdot)}}}_{L^p}^{1/(p-q)}.
	\]
	Thus the second claimed convergence in the $L^p$-norm follows from the uniform convergence result of Proposition \ref{p-uniform-convergence}. In the case of $M\notin \Mm^p$, we obtain convergence in probability by a simple localization argument (as done previously in \cite[Theorem 14.16]{FV}). 

	We now turn to the general case where $V\neq 0$ and define $\bar{V} :[0,1]\to\RR^{2d}$ by $\bar{V}_t=(V_t,V_t)$. In contrast to the martingale rough path convergence, the corresponding convergence proof for the finite variation process is straightforward.

\begin{Lemma}\label{l-fv-convergence}
	For every $\delta>0$,
	\[
		\dvar{(1+\delta)}\bra{V^{D_n},\bar{V}_{\tau_n(\cdot)}} \to 0 \text{ in probability as } \abs{D_n}\to0.
	\]
	If in addition $V$ is bounded in $L^r$ for some $r\geq 1$ then the convergence also holds in $L^r$. 
\end{Lemma}
\begin{proof}
	The first convergence follows readily from \cite[Proposition 1.28]{FV} and interpolation (see \cite[\S14.1]{FV} for details). The stronger convergence in the $L^r$-norm is a consequence of the uniform $L^r$-bound assumption (\cite[Theorem 4.14]{chung2001course}). 
\end{proof}
	
	Returning to the proof of Theorem \ref{t-hoff-process-convergence}, it is a straightforward exercise in algebra to confirm that $\rp{X}^\infty_{s,t} = T_{\bar{V}}\bra{\rp{X}^\infty_{s,t}}$, where $T_{\bar{V}}$ is the standard rough path translation operator defined in \cite[\S9.4.6]{FV}. The theorem then follows immediately from the continuity of the translation operator (\cite[Corollary 9.35]{FV}). 
\end{proof}	

\begin{Remark}
The quadratic variation $\qv{X}=\qv{M}$ of the local martingale component $M$ naturally appears as we consider the limiting area of the rough path lift $\rp{X}^D$. This non-linear phenomena was first observed by Hoff in the context of moving  Brownian frames \cite{hoff2005brownian}. 
\end{Remark}

\begin{Remark}
In the special case of the evenly spaced sampling sequence, that is $D_n=\{t^n_k:=\frac{k}{n}\}_{k=0}^n$, we have $\tau_n\bra{\frac{k}{n}}=\frac{k}{n}$ and $\tau_n\bra{\frac{2k+1}{2n}}=\frac{k+1}{n}$. It follows immediately that we can drop the time change $\tau_n(\cdot)$ in Theorem \ref{t-hoff-process-convergence} to arrive at the neater convergence result: 
\[
d_{p\textup{-var;}[0,1]}\bra{\XX^{D_n}, \XX^\infty} \to 0 \text{ as } n\to\infty,
\]
either in probability, or in $L^{p\wedge q}$, (depending on the regularity imposed on $X$). 
\end{Remark}

\section{Recovery of the It\^{o} integral}\label{s-ito-recover}

We now consider the second main result of the paper.

\begin{Theorem}[\textbf{Recovery of the It\^{o} integral}]\label{t-ito-sde}
		Let $X:[0,1]\to\RR^d$ be a continuous semimartingale and let $\{D_n\}_{n=1}^\infty \subset \Dd_{[0,1]}$ be a sequence of times satisfying $\abs{D_n}\to 0$. 
		Let $f=(f_i)_{i=1}^d$ be a collection of $\textup{Lip}^\gamma(\RR^d,\RR^q)$ mappings where $\gamma > 2$. 
		For each $n$, set $X^n:=X^{D_n}$ to be the associated Hoff process of the event stream $\seq{X_{t_i}}_{t_i\in D_n}$. Lastly, let $Y^n$ be the solution to the random ODE
		\begin{equation}\label{e-delay}
			dY^n_t = f\bra{X_t^{n,b}}\, dX^{n,f}_t = \sum^d_{i=1} f_i\bra{X^{n,b}_t}\, dX^{n,f;i}_t, \gap\gap Y^n_0 = y_0 \in \RR^q,
		\end{equation}
		and set $Y$ to be the standard It\^{o} integral
		\[
			dY_t = f\bra{X_t}\, dX_t = \sum^d_{i=1} f_i\bra{X_t}\, dX^i_t, \gap\gap Y_0 = y_0 \in \RR^q. 
		\]
		Then for all $p\in (2,\gamma)$, 
		\begin{equation}\label{e-ito-convergence-result}
			\dvar{p}\bra{Y^n,Y_{\tau_n(\cdot)}}\to 0 \text{ in probability as } n\to\infty.
		\end{equation}
		If in addition the finite variation process $V$ of $X$ is bounded in $L^q$ for some $q\geq 1$ and $M\in\Mm^p$, then the convergence (\ref{e-ito-convergence-result}) also holds in the $L^{p\wedge q}$-norm. 
\end{Theorem}

Before presenting the proof of Theorem \ref{t-ito-sde} we set up some notation. Denote the rough path lift  of $\bar{X_t}:=(X_t,X_t)$ by $\rp{\bar{X}}$. Thus $\rp{\bar{X}}: \Delta_{[0,1]} \to G^{(2)}(\RR^{2d})$ with 
\begin{align*}
	\rp{X}_{s,t} :&=  1 + \bar{X}_{s,t}
		+ \frac{1}{2} \bar{X}_{s,t} \otimes \bar{X}_{s,t}
+
\left(\begin{array}{cc}
A_{s,t} & A_{s,t}\\
A_{s,t} & A_{s,t}
\end{array}\right),
\end{align*}
where $A : \Delta_{[0,1]} \to \l[\RR^d,\RR^d\r]$ is the L\'{e}vy area process of $X$. 
Importantly we have $\rp{X}^\infty = \rp{\bar{X}}^{\psi}$; that is $\rp{X}^\infty$ is the rough path created from perturbing $\rp{\bar{X}}$ by the map $\psi:\Delta_{[0,1]} \to [\RR^{2d}, \RR^{2d}]$:
\begin{align*}
	\rp{X}^\infty = \rp{\bar{X}}_{s,t} + \psi_{s,t} 
	&= \exp_2\bra{\bar{X}_{s,t} +
	\left(\begin{array}{cc}
A_{s,t} & A_{s,t}\\
A_{s,t} & A_{s,t}
\end{array}\right)
} + \psi_{s,t}\\
&
	= \exp_2\bra{\bar{X}_{s,t} +
	\left(\begin{array}{cc}
A_{s,t} & A_{s,t}\\
A_{s,t} & A_{s,t}
\end{array}\right)
 + \psi_{s,t} }, 
\end{align*}
where the path $\psi$ is defined by
\[
	\psi_{s,t} := \left(\begin{array}{cc}
0 & -\frac{1}{2}\qv{X}_{s,t}\\
\frac{1}{2}\qv{X}_{s,t} & 0
\end{array}\right) \in \l[ \RR^{2d}, \RR^{2d} \r].
\]
The appearance of the covariation term $\psi$ in the limit $\rp{X}^\infty$ of Theorem \ref{t-hoff-process-convergence} allows us to recover the It\^{o} integral from the Stratonovich integral limit. In particular, the drift terms from the Stratonovich correction cancel out. 
We reserve the notation $\rho : \RR^d \oplus \RR^q \to \RR^q$ for the canonical projection. 

\begin{proof}[Proof of Theorem \ref{t-ito-sde}]
	We consider a more complicated SDE on the larger space $\RR^d \oplus \RR^q$ and denote the standard bases of $\RR^d$ and $\RR^q$ by $(\hat{e}_i)_{i=1}^d$ and $(\bar{e}_j)_{j=1}^q$ respectively. Indeed, set $z=(\hat{z}, \bar{z})$ to be a path in $\RR^d \oplus \RR^q$ and define vector fields on $\RR^d \oplus \RR^q$ by  
	\[
		Q_i(z):=(\hat{e}_i,0), \gap\gap W_k(z):= \sum^q_{j=1} (0, \bar{e}_j)f^j_k(\hat{z}),
	\] 
	for $i,k=1,\ldots,d$. Here we have used the standard notation convention: $f_i(x)=(f_i^1(x), \ldots, f_i^q(x)) \in \RR^q$ for $x\in \RR^d$.
	Let $z^n=(\hat{z}^{n,1},\ldots, \hat{z}^{n,d}, \bar{z}^{n,1}, \ldots, \bar{z}^{n,q})$ be the solution to the SDE
	\[
		dz^n_t= \sum^d_{i=1}\bra{ Q_i(z^n_t)\, dX^{n,b;i}_t + W_i(z^n_t)\, dX^{n,f;i}_t}, \gap\gap z^n_0 = (0,y_0) \in \RR^d \oplus \RR^q. 
	\]
	Since the axis path $X^n$ is piecewise linear, (thus of finite variation), we could equivalently formulate the above SDE into its Stratonovich form without incurring a corrective drift term. 
	It follows that the projection $\bar{z}^n := \rho(z^n)$ of $z^n$ onto $\RR^q$ satisfies the SDE
	\[
		d\bar{z}^{n,j}_t = \sum^d_{i=1} W_i^j\bra{z^n_t}\, dX^{n,f;i}_t = \sum^d_{i=1} f^j_i\bra{\hat{z}^n_t}\, dX^{n,f;i}_t. 
	\]
	Moreover, Theorem 17.3 of \cite{FV} tells us that $z^n=\pi_{(Q,W)}\bra{0, (0,y_0); \rp{X}^n}$ almost surely. 	
	
	A simple computation confirms that the only non-zero Lie brackets between the vector fields $Q_{i_1}, Q_{i_2}, W_{k_1}$ and $W_{k_2}$ are of the form 
	\[
		\l[Q_i, W_k\r](\hat{z}) = Q_iW_k(\hat{z}) = \sum^e_{j=1} (0,\bar{e}_j)  \frac{\partial f^j_k}{\partial x_i}(\hat{z}),
	\]
	for all $i,k=1,\ldots, d$. We denote this collection of vector fields by 
	\[
		L:=\bra{Q_iW_k : i, k \in \seq{1,\ldots, d}}.
	\]
 	Next consider the SDE:
 	\begin{align*}
 		dz_t = (Q+W)(z_t)\, dX_t &= (Q+W)(z_t) \circ dX_t - \frac{1}{2}\sum^d_{i,k=1} {Q_iW_k(z_t)\, d\ip{X^i}{X^k}_t}\\
 	&= (Q+W)(z_t)\circ dX_t - \frac{1}{2}\sum^d_{i,k=1} \l[ Q_i, W_k\r](z_t)\, d\ip{X^i}{X^k}_t, 
 	\end{align*}
 	with initial condition $z_0 = (0, y_0) \in \RR^d \oplus \RR^q$. 
 	Again appealing to Theorem 17.3 of \cite{FV}, it follows that almost surely
 	\[
 		z=\pi_{(Q,W;L)}\bra{0,(0,y_0); (\rp{X}, \psi)}.
	\]
	 As noted above, $\bar{\rp{X}}^\psi=\rp{X}^\infty$, and so the perturbation theorem of \cite[Theorem 12.14]{FV} guarantees the equality:
 	\begin{align*}
 		z&=\pi_{(Q,W;L)}\bra{0, (0,y_0); (\rp{X}, \psi)}
		= \pi_{(Q,W)}\bra{0, (0,y_0); \bar{\rp{X}}^\psi}
		= \pi_{(Q,W)}\bra{0, (0,y_0); \rp{X}^\infty}.
 	\end{align*}
 	In light of the convergence result of Theorem \ref{t-hoff-process-convergence}, we conclude that
 	\[
 		\forall p> 2,\,\,\, \dvar{p}\bra{z^n,z_{\tau_n(\cdot)}} \to 0 \text{ in probability (or in } L^{p\wedge q} \text{) as }  n\to\infty.
 	\] 
 	Indeed, as noted in the above remark, almost sure equality and convergence in probability (and $L^p$) are all preserved under continuous mappings. Since the It\^{o} map is continuous with respect to the driving rough path signal under the $p$-variation topology, the proof is then finished by noting that $Y^n=\rho(z^n)$ and $Y=\rho(z)$. 
\end{proof}

The remainder of the paper is devoted to establishing the technical results referred to in the proof of Theorem \ref{t-hoff-process-convergence}.

\section{Uniform convergence of the Hoff process}\label{s-uniform}

In this section we establish the following uniform convergence in $L^p$ of the Hoff process in the case that $X=M\in \Mm^p$. 

\begin{Proposition}\label{p-uniform-convergence}
Let $M\in\Mm^p$ for some $p>2$ and let $\{D_n\}_{n=1}^\infty \subset \Dd_{[0,1]}$ be a sequence of partitions with $\abs{D_n}\to 0$. Then
\[
\abs{d_{\infty;[0,1]}\bra{\rp{M}^{D_n}, \rp{M}_{\tau_n(\cdot)}}}_{L^p} \to 0 \textup{ as } \abs{D_n}\to 0.
\]
\end{Proposition}

Before presenting a proof, we introduce two useful technical lemmas concerning $\Mm^p$-martingales. We make frequent use of the famous Burkholder-Davis-Gundy (BDG) inequality (\cite[Theorem 42.1]{rogers2000diffusions}).

\begin{Lemma}\label{l-irritating}
Suppose $X:[0,1]\to\RR$ is a $\Mm^p$-martingale for some $p>2$. Then there exists a constant $C_p>0$ such that for all $D=\{t_k\}_{k=0}^n \subset [0,1]$, we have
\[
\EE\bra{\sup_{t_k\in D} \sup_{t\in [t_k,t_{k+1}]} \abs{X_t-X_{t_{k}}}^p}
\leq C_p \norm{X}_p^2  \EE\bra{\sup_{t_k\in D} \qv{X}^{p/2}_{t_k,t_{k+1}}}^{\frac{p-2}{p}}.
\]
\end{Lemma}
\begin{proof}
We first apply the BDG inequality to deduce that
\begin{align*}
\EE\bra{\sup_{t_k\in D} \sup_{t\in [t_k,t_{k+1}]} \abs{X_t-X_{t_k}}^p}
&\leq \sum_{t_k\in D} \EE\bra{\sup_{t\in [t_k,t_{k+1}]} \abs{X_t-X_{t_k}}^p}
\leq C_p \sum_{t_k\in D} \EE\bra{\l<X\r>_{t_k,t_{k+1}}^{p/2}}. 
\end{align*}
Then we apply the H\"{o}lder inequality with $q=p/2$ and $r=p/(p-2)$, (so that $q^{-1}+r^{-1}=1$), to find
\begin{align*}
\sum_{t_k\in D} \EE\bra{\l<X\r>_{t_k,t_{k+1}}^{p/2}}&=\EE\bra{ \sum_{t_k\in D} \l<X\r>_{t_k,t_{k+1}}\cdot \sup_{t_k\in D} \l<X\r>_{t_k,t_{k+1}}^{p/2-1}}\\
&= \EE\bra{  \l<X\r>_{0,1}\cdot \sup_{t_k\in D} \l<X\r>_{t_k,t_{k+1}}^{p/2-1}}\\
&\leq \abs{\qv{X}_{0,1}}_{L^r} \cdot \abs{\sup_{t_k\in D} \qv{X}^{p/2-1}_{t_k,t_{k+1}}}_{L^q}\\
&= \EE\bra{\qv{X}^{p/2}_{0,1}}^{2/p} \EE\bra{\sup_{t_k\in D} \qv{X}^{p/2}_{t_k,t_{k+1}}}^{\frac{p-2}{p}}
\leq C_p \norm{X}_p^2  \EE\bra{\sup_{t_k\in D} \qv{X}^{p/2}_{t_k,t_{k+1}}}^{\frac{p-2}{p}}.
\end{align*}
The proof is complete. 
\end{proof}

\begin{Lemma}\label{l-useful-2}
Suppose $X,Y:[0,1]\to\RR$ are $\Mm^p$-martingales for some $p>2$. Then there exists a constant $C_p>0$ such that for $D=\{t_k\}_{k=0}^n \subset [0,1]$, the following inequalities hold:
\begin{align*}
\EE\bra{\sup_{t_k\in D} \abs{\sum_{t_i\leq t_k} X_{t_{i-1}}(Y_{t_i}-Y_{t_{i-1}}) - \int^{t_k}_0 X_r\, dY_r}^{p/2}}&\leq 
C_p \norm{Y}^{p/2}_p \norm{X}_p \EE\bra{\sup_{t_k\in D} \l<X\r>_{t_k,t_{k+1}}^{p/2}}^{\frac{p-2}{2p}};\\
\EE\bra{\sup_{t_k\in D} \abs{\sum_{t_i \leq t_k} \bra{X_{t_{i+1}}-X_{t_i}}\bra{Y_{t_{i+2}}-Y_{t_{i+1}}}}^{p/2}} &\leq C_p\norm{X}^{p/2}_p \norm{Y}_p \EE\bra{\sup_{t_k\in D} \l<Y\r>_{t_k,t_{k+1}}^{p/2}}^{\frac{p-2}{2p}}.
\end{align*}
\end{Lemma}
\begin{proof}
Define the previsible process $Z_r:= \sum_{0\neq t_i \in D} X_{t_{i-1}}\id\{r\in (t_i,t_{i+1}]\}$. By the Kunita-Watanabe identity, along with the BDG and Cauchy-Schwarz inequalities, we have 
\begin{align*}
&\EE\bra{\sup_{t_k\in D} \abs{\sum_{t_i\leq t_k} X_{t_{i-1}}(Y_{t_i}-Y_{t_{i-1}}) - \int^{t_k}_0 X_r\, dY_r}^{p/2}}\\
&=
\EE\bra{\sup_{t_k\in D} \abs{\int^{t_k}_0 (Z_r-X_r)\, dY_r}^{p/2}}\\
&\leq C_p\EE\bra{ \abs{\int^1_0 \abs{Z_r-X_r}^2\, d\l<Y\r>_r}^{p/4}}\\
&
\leq C_p\EE\bra{\sup_{r\in [0,1]} \abs{Z_r-X_r}^{p/2} \l<Y\r>_{0,1}^{p/4}}\\
&
\leq C_p\sqrt{ \EE\bra{\sup_{r\in [0,1]} \abs{Z_r-X_r}^{p}} \EE\bra{\l<Y\r>^{p/2}_{0,1}}}
\leq C_p \norm{Y}_p^{p/2} \sqrt{ \EE\bra{\sup_{r\in [0,1]} \abs{Z_r-X_r}^{p}}}.
\end{align*}
Note that
\begin{align*}
\EE\bra{\sup_{r\in [0,1]} \abs{Z_r-X_r}^{p}}
&= \EE\bra{\sup_{t_i \in D} \sup_{t\in [t_{i+1},t_{i+2}]} \abs{X_t-X_{t_i}}^p}
\leq C_p\EE\bra{\sup_{t_i\in D} \sup_{t\in [t_i,t_{i+1}]} \abs{X_t-X_{t_i}}^p}.
\end{align*}
Thus Lemma \ref{l-irritating} gives:
\begin{align*}
\EE\bra{\sup_{r\in [0,1]} \abs{Z_r-X_r}^{p}}
&\leq C_p \norm{X}_p^2  \EE\bra{\sup_{t_k\in D} \qv{X}^{p/2}_{t_k,t_{k+1}}}^{\frac{p-2}{p}},
\end{align*}
and the first claimed inequality follows. 
The second inequality has an almost identical proof; we simply use $Z_r:= \sum_{0\neq t_i \in D} Y_{t_{i-1}}\id\{r\in (t_i,t_{i+1)}]\}$ instead. 
\end{proof}

\begin{proof}[Proof of Proposition \ref{p-uniform-convergence}]
For notational convenience we drop the usual $n$ superscript in $t^n_k$ and just write $D_n=\{t_k\}_{k=0}^n$. 
We first consider the increment level. For $\theta\in\{b,f\}$, using Lemma \ref{l-irritating} gives
\begin{align*}
\EE\bra{\sup_{s\neq t} \abs{M^{D_n,\theta;i}_{s,t} - M^i_{\tau_n(s),\tau_n(t)}}^p}
&\leq C_p \norm{M^i}_p^2 \EE\bra{\sup_{t_k\in D_n} \l<M^i\r>_{t_k,t_{k+1}}^{p/2}}^{\frac{p-2}{p}}.
\end{align*}
Since $M\in\Mm^p$, we use the Dominated Convergence Theorem (DCT) to deduce that 
\[
\EE\bra{\sup_{s\neq t} \abs{M^{D_n,\theta;i}_{s,t} - M^i_{\tau_n(s),\tau_n(t)}}^p}
\to 0 \textup{ as } \abs{D_n}\to 0.
\]

We now turn our attention to the second level.
Let us denote the L\'{e}vy area process associated with $M^{D_n}$ by 
\begin{align}
A^{D_n,\theta,\lambda;i,j}_{s,t} 
:&= \frac{1}{2}\int^t_s \bra{M^{D_n,\theta;i}_{s,r}\, dM^{D_n,\lambda;j}_{r} - M^{D_n,\lambda;j}_{s,r}\, dM^{D_n,\theta;i}_r}\notag\\ 
&= \int^t_s M^{D_n,\theta;i}_r\, dM^{D_n,\lambda;j}_r - \frac{1}{2} M^{D_n,\theta;i}_{s,t} M^{D_n,\lambda;j}_{s,t},\label{e-m-levy-area}
\end{align}
where $\theta,\lambda\in\{b,f\}$ and $i,j\in\{1,\ldots,d\}$. Since $M^{D_n}$ is piecewise linear, (and thus has zero quadratic variation), we are able to switch between the It\^{o} and Stratonovich integrals in (\ref{e-m-levy-area}) without incurring a correction term. 

 Recalling (\ref{e-area-cc}), we have 
\begin{align*}
&d_{\infty;[0,1]}\bra{\rp{M}^{D_n}, \rp{M}_{\tau_n(\cdot)}}
\leq C_d \sup_{s\neq t} \bra{ \norm{M^{D_n}_{s,t} - \bar{M}_{\tau_n(s),\tau_n(t)}}^p + 
 \norm{A^{D_n}_{s,t} - \hat{A}_{\tau_n(s),\tau_n(t)}}^{p/2}},
\end{align*}
where $\hat{A} = \{\hat{A}^{\theta,\lambda;i,j}\}$ is defined by
\[
	   \hat{A}^{\theta,\lambda;i,j}_{s,t} := \begin{cases}
A_{s,t}^{M;i,j} & \;\;\;\;\;\;\;\text{ if } \theta = \lambda;\\
A_{s,t}^{M;i,j}-\frac{1}{2}\ip{M^{i}}{M^{j}}_{s,t} & \;\;\;\;\;\;\;\text{ if } \theta = b, \; \lambda=f;\\
A_{s,t}^{M;i,j} + \frac{1}{2}\ip{M^{i}}{M^{j}}_{s,t} &  \;\;\;\;\;\;\;\text{ if } \theta = f, \; \lambda=b.
\end{cases}
	\]
Thus to complete the proof it suffices to show that 
\begin{equation}\label{e-area-goal-1}
\EE\bra{\sup_{s\neq t} \abs{A^{D_n,\theta,\lambda;i,j}_{s,t} - \hat{A}^{\theta,\lambda;i,j}_{\tau_n(s),\tau_n(t)}}^{p/2}} \to 0 \textup{ as } \abs{D_n} \to 0. 
\end{equation}
		
Given a bounded variation path $x:[0,1]\to\RR^d$ with area process $A:\Delta_{[0,1]}\to [\RR^d,\RR^d]$, we have the decomposition:
\[
A_{s,t} = A_{0,t} - A_{0,s} - \frac{1}{2}[x_{0,s},x_{s,t}] \gap \forall (s,t)\in \Delta_{[0,1]}.
\]
In light of this formula, the fact that straight line segments enclose zero area, and the previously established uniform convergence at level 1, to show (\ref{e-area-goal-1}) it is sufficient to only prove that 
\[
\EE\bra{\sup_{t_k \in D_n} \abs{A^{D_n,\theta,\lambda;i,j}_{0,\frac{k}{n}} - \hat{A}^{\theta,\lambda;i,j}_{0,\tau_n\bra{\frac{k}{n}}}}^{p/2}} \to 0 \textup{ as } \abs{D_n} \to 0.
\]

To this end, we first consider the case of $(\theta,\lambda)=(b,b)$, (the case of $(f,f)$ is almost identical and thus omitted). Setting $X:=M^i$, $Y:=M^j$ allows us to free up $(i,j)$ for index notation. A simple computation gives:
\begin{align*}
A^{D_n,b,b;i,j}_{0,\frac{k}{n}} &= \frac{1}{2}\sum_{i\leq k} \seq{X_{t_{i-1}}(Y_{t_i}-Y_{t_{i-1}}) - Y_{t_{i-1}}(X_{t_i}-X_{t_{i-1}})};\\
\hat{A}^{b,b;i,j}_{0,\tau_n\bra{\frac{k}{n}}} &= A^{M;i,j}_{0,t_k} = \frac{1}{2}\int^{t_k}_0 \bra{X_r\, dY_r - Y_r\, dX_r}.
\end{align*}
It follows that 
\begin{align*}
\EE\bra{\sup_{t_k \in D_n} \abs{A^{D_n,b,b;i,j}_{0,\frac{k}{n}} - \hat{A}^{b,b;i,j}_{0,\tau_n\bra{\frac{k}{n}}}}^{p/2}}
&\leq C_p \EE\bra{\sup_{t_k\in D_n} \abs{\sum_{t_i\leq t_k} X_{t_{i-1}}(Y_{t_i}-Y_{t_{i-1}}) - \int^{t_k}_0 X_r\, dY_r}^{p/2}}\\
& + C_p \EE\bra{\sup_{t_k\in D_n} \abs{\sum_{t_i\leq t_k} Y_{t_{i-1}}(X_{t_i}-X_{t_{i-1}}) - \int^{t_k}_0 Y_r\, dX_r}^{p/2}}. 
\end{align*}
By applying Lemma \ref{l-useful-2} to both terms, we can then use the DCT to find that 
\begin{align*}
&\EE\bra{\sup_{t_k \in D_n} \abs{A^{D_n,b,b;i,j}_{0,\frac{k}{n}} - \hat{A}^{b,b;i,j}_{0,\tau_n\bra{\frac{k}{n}}}}^{p/2}}\\
&\gap\gap\gap\gap \leq  C_{p,M}\bra{ \EE\bra{\sup_{t_k\in D_n} \l<X\r>_{t_k,t_{k+1}}^{p/2}}^{\frac{p-2}{2p}} +  \EE\bra{\sup_{t_k\in D_n} \l<Y\r>_{t_k,t_{k+1}}^{p/2}}^{\frac{p-2}{2p}}}\\
&\gap\gap\gap\gap \to 0 \textup{ as } \abs{D_n}\to 0.
\end{align*} 

Since $A^{D_n,b,f;i,j}=-A^{D_n,f,b;j,i}$, it only remains to prove the claimed convergence for the case of $(\theta,\lambda)=(b,f)$. To this end, we first note that 
\begin{equation}\label{e-bf-area}
A^{D_n,b,f;i,j}_{0,\frac{k}{n}} = \sum_{t_i \leq t_k} X_{t_i}(Y_{t_{i+2}}-Y_{t_{i+1}}) - \frac{1}{2}X_{0,t_k}Y_{t_1,t_{k+2}}.
\end{equation}
We have the decomposition
\[
\sum_{t_i \leq t_k} X_{t_i}(Y_{t_{i+2}}-Y_{t_{i+1}}) = \sum_{t_i\leq t_k} X_{t_{i+1}}(Y_{t_{i+2}}-Y_{t_{i+1}}) - \sum_{t_i\leq t_k} (X_{t_{i+1}}-X_{t_i})(Y_{t_{i+2}}-Y_{t_{i+1}}). 
\]
As before, we use the second inequality of Lemma \ref{l-useful-2} to deal with the second term:
\[
\EE\bra{\sup_{t_k\in D_n} \abs{\sum_{t_i \leq t_k} \bra{X_{t_{i+1}}-X_{t_i}}\bra{Y_{t_{i+2}}-Y_{t_{i+1}}}}^{p/2}} 
\leq C_{p,X,Y} \EE\bra{\sup_{t_k\in D_n} \l<Y\r>_{t_k,t_{k+1}}^{p/2}}^{\frac{p-2}{2p}}. 
\]
The first inequality of the same lemma guarantees that 
\[
\EE\bra{\sup_{t_k\in D_n} \abs{\sum_{t_i\leq t_k} X_{t_{i}}(Y_{t_{i+1}}-Y_{t_{i}}) - \int^{t_k}_0 X_r\, dY_r}^{p/2}} \leq 
C_{p,X,Y} \EE\bra{\sup_{t_k\in D_n} \l<X\r>_{t_k,t_{k+1}}^{p/2}}^{\frac{p-2}{2p}}.
\]
Therefore,
\begin{align}
&\EE\bra{\sup_{t_k \in D_n} \abs{A^{D_n,b,f;i,j}_{0,\frac{k}{n}} - \bra{\int^{t_k}_0 X_r\, dY_r - \frac{1}{2}X_{t_k}Y_{t_k} }}^{p/2}}\notag\\
&\gap\gap\gap\gap \leq C_{p,X,Y}2^{p/2-1} \bra{\EE\bra{\sup_{t_k\in D_n} \l<X\r>_{t_k,t_{k+1}}^{p/2}}^{\frac{p-2}{2p}} +  \EE\bra{\sup_{t_k\in D_n} \l<Y\r>_{t_k,t_{k+1}}^{p/2}}^{\frac{p-2}{2p}}}.\label{e-therefore-2}
\end{align}
Rearranging via the integration by parts formula for continuous (semi-)martingales $M,N$:
\[
d(MN)_t = M_tdN_t + N_tdM_t + d\l<M,N\r>_t,
\]
we arrive at the identity
\begin{align*}
&\int^{t_k}_0 X_r\, dY_r - \frac{1}{2}X_{t_k}Y_{t_k}\\
&= \int^{t_k}_0 X_r\, dY_r - \frac{1}{2}\bra{\int^{t_k}_0 X_r\, dY_r + \int^{t_k}_0 Y_r\, dX_r + \l<X,Y\r>_{0,t_k}}\\
&= \frac{1}{2}\int^{t_k}_0 \bra{X_r\, dY_r - Y_r\, dX_r} - \frac{1}{2}\l<X,Y\r>_{0,t_k}
= A_{0,t_k}^{M;i,j}- \frac{1}{2}\l<X,Y\r>_{0,t_k} 
= \hat{A}^{b,f;i,j}_{0,\frac{k}{n}}.
\end{align*}
Thus the desired convergence (\ref{e-area-goal-1}) for the case of $(\theta,\lambda)=(b,f)$ is deduced by applying the DCT to (\ref{e-therefore-2}). 
The proof is complete. 
\end{proof}

\section{Maximal $p$-variation bounds}\label{s-maximal}

The objective of this section is to prove the following maximal $p$-variation bound.

\begin{Proposition}[\textbf{Maximal $p$-variation bounds}]\label{p-maximal-p-variation-bound} 
	For all $p>2$, there exists a constant $C_p>0$ such that 
	\[
		\sup_{D\in\Dd_{[0,1]}} \EE\bra{ \pnorm{ \rp{M}^{D}}{p}^p} \leq C_p\norm{M}_p^p < \infty.
	\]
\end{Proposition}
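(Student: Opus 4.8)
The plan is to estimate the two levels of $\rp{M}^D$ separately after reducing to the natural grid of the Hoff process. Write $M^D = \pi_1\bra{\rp{M}^D}$ for the increment path and $A^D = \bra{A^{D,\theta,\lambda;i,j}}$ for the L\'evy area of $\rp{M}^D$. By equivalence of homogeneous norms on $G^2\bra{\RR^{2d}}$ one has $\norm{\rp{M}^D_{s,t}} \simeq \abs{M^D_{s,t}} + \abs{A^D_{s,t}}^{1/2}$, hence
\[
	\pnorm{\rp{M}^D}{p}^p \leq C_1 \bra{ \pnorm{M^D}{p}^p + \pnorm{A^D}{p/2}^{p/2} },
\]
where $\pnorm{A^D}{p/2}$ denotes the $(p/2)$-variation of $A^D$ regarded as an $\RR^{2d\times 2d}$-valued path. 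It therefore suffices to bound $\EE{\pnorm{M^D}{p}^p}$ and $\EE{\pnorm{A^D}{p/2}^{p/2}}$ by a constant depending only on $p$ and $\norm{M}_p$. Now $M^D$ is piecewise linear with corner set $D^* := D \cup \seq{t_i^*}$ --- on $[t_i,t_i^*]$ only the lag block moves and on $[t_i^*,t_{i+1}]$ only the lead block --- so every increment of $A^D$ over a single linear piece vanishes, while $\abs{M^D_{s,t}}$ is affine on each piece; consequently the suprema defining both variations are attained, up to a fixed constant, over partitions contained in $D^*$. This reduces everything to a statement about the discrete semimartingale $\bra{M_{t_i}}_{t_i\in D}$.

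The level-one term together with the two ``diagonal'' area blocks $A^{D,b,b}$, $A^{D,f,f}$ are handled jointly. A glance at the definition shows that $M^{D,b}$ and $M^{D,f}$ are each a (weakly increasing) time-reparametrization of the piecewise-linear interpolation $M^{\qv{D}}$ of $M$ along $D$: both traverse the vertices $0, M_{t_1}, \dots, M_{t_n}$ in order. Since $p$-variation and the level-two signature are reparametrization-invariant, $\pnorm{M^D}{p}^p \leq C_2 \pnorm{S_2\bra{M^{\qv{D}}}}{p}^p$ and $\pnorm{A^{D,\theta,\theta}}{p/2}^{p/2} \leq C_2 \pnorm{S_2\bra{M^{\qv{D}}}}{p}^p$. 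The uniform estimate $\sup_{D\in\Dd_{[0,1]}} \EE{\pnorm{S_2\bra{M^{\qv{D}}}}{p}^p} \leq C\bra{p,\norm{M}_p}$ needed here is the classical uniform $p$-variation bound for piecewise-linear approximations of an enhanced continuous local martingale (see \cite[\S14]{FV} and \cite{friz2007differential}); alternatively it can be re-derived from the Burkholder--Davis--Gundy inequality and Lemma \ref{l-holder-play} by the arguments of Section 5.

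The cross blocks $A^{D,b,f}$ and $A^{D,f,b}$ (related by $A^{D,f,b;i,j} = -A^{D,b,f;j,i}$) carry the bulk of the work. For grid points $t_m < t_k$ in $D$, the identity used in the proof of Proposition \ref{p-area-pointwise-convergence} expresses $A^{D,b,f;i,j}_{t_m,t_k}$ as the sum of the discrete left-point It\^{o} sum $\sum_r M^i_{t_{r+1}}\bra{M^j_{t_{r+2}}-M^j_{t_{r+1}}}$ (approximating $\int M^i\,dM^j$), minus the double-increment sum $\sum_r \bra{M^i_{t_{r+1}}-M^i_{t_r}}\bra{M^j_{t_{r+2}}-M^j_{t_{r+1}}}$, minus the boundary term $\tfrac12 M^{D,b;i}_{t_m,t_k} M^{D,f;j}_{t_m,t_k}$. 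On $D^*$ the first sum is a discrete martingale transform of exactly the type controlled in the previous paragraph; the boundary term contributes at most $C_3\bra{\pnorm{M^{D,b}}{p}^p + \pnorm{M^{D,f}}{p}^p}$ to the $(p/2)$-variation; and the double-increment sum is the discrete martingale realized in Lemma \ref{l-technical-lemma} as $\int_0^\cdot Z_r\,dM^j_r$, whose $(p/2)$-variation over sub-intervals of $D^*$ is estimated by Burkholder--Davis--Gundy and Lemma \ref{l-holder-play}, as in the proof of Lemma \ref{l-technical-lemma} but now over arbitrary sub-intervals rather than $[0,1]$. The resulting bound carries a strictly positive power of $\EE{\sup_{t_i\in D} \qv{M}^{p/2}_{t_i,t_{i+1}}}$, which is finite --- bounded in terms of $\norm{M}_p$ --- and in particular uniformly bounded in $D$. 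Combining the four contributions and using $\qv{M}_{0,1}\in L^{p/2}\bra{\PP}$ (which holds because $M\in\Mm^p$) gives the asserted $C = C\bra{p,\norm{M}_p}$.

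The step I expect to be the main obstacle is this last one. One cannot bound the $(p/2)$-variation of the individual summands of $A^{D,b,f}$ by passing to their continuous-time martingale representatives and invoking a variation estimate there, since for $p<4$ a nontrivial continuous martingale has infinite $(p/2)$-variation almost surely. The discreteness of the Hoff grid must be kept throughout: all the sums above run over sub-partitions of the finite set $D^*$, and the gain comes precisely from the fact that the error quantities carry a power of the vanishing modulus $\sup_{t_i\in D}\qv{M}_{t_i,t_{i+1}}$ which absorbs the (at most polynomial in $\abs{D}^{-1}$) number of partition intervals, so that the expectations stay bounded uniformly in $D$.
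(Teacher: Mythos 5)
Your architecture is broadly parallel to the paper's: the diagonal blocks $\theta=\lambda$ are handled exactly as in Proposition \ref{p-first-prop} (reparametrisation of the piecewise-linear interpolation plus the enhanced BDG bound of \cite[Theorem 14.15]{FV}), and the reduction to partitions through the corner set is a standard and fixable point. The genuine gap is precisely in the step you flag as the main obstacle: the uniform-in-$D$ bound for the $(p/2)$-variation contribution of the cross block $A^{D,b,f}$. Your proposed mechanism --- apply BDG and Lemma \ref{l-holder-play} to the double-increment sum ``over arbitrary sub-intervals'' and let a positive power of $\EE{\sup_{t_i\in D}\qv{M}^{p/2}_{t_i,t_{i+1}}}$ absorb the number of intervals --- does not work, for three reasons. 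First, the supremum over sub-partitions of $D^*$ sits \emph{inside} the expectation: per-interval moment bounds of the type in Lemmas \ref{l-technical-lemma} and \ref{l-new-technical-lemma} do not decay with the length of the sub-interval, so summing them over the (adaptively chosen) blocks, or crudely over all $O(n^2)$ pairs of grid points, multiplies a non-small bound by a diverging count. Second, for a general $M\in\Mm^p$ the modulus $\EE{\sup_{t_i\in D}\qv{M}^{p/2}_{t_i,t_{i+1}}}$ tends to zero with \emph{no} quantitative rate, so it cannot absorb any polynomial in $\abs{D}^{-1}$. Third, Proposition \ref{p-maximal-p-variation-bound} requires a bound uniform over \emph{all} $D\in\Dd_{[0,1]}$, including coarse partitions where this modulus is not small at all; an argument predicated on $\abs{D}\to 0$ cannot yield the statement.

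The estimate that actually closes this step is pathwise and crude, and it is what the paper's deterministic Lemma \ref{l-tricky-technical} encodes: the offending discrete sums are dominated term by term, e.g.
$\abs{\sum_r M^i_{t_r,t_{r+1}}\,M^j_{t_{r+1},t_{r+2}}}\leq \tfrac12\sum_r\bra{\abs{M^i_{t_r,t_{r+1}}}^2+\abs{M^j_{t_{r+1},t_{r+2}}}^2}$,
i.e.\ by a \emph{nonnegative additive} functional of the grid; superadditivity of $x\mapsto x^{p/2}$ then bounds the entire $(p/2)$-variation sum, simultaneously for every sub-partition, by $\bra{\sum_r\abs{M_{t_r,t_{r+1}}}^2}^{p/2}$, and the discrete BDG inequality bounds its expectation by $C\,\EE{\qv{M}^{p/2}_{0,1}}\leq C\norm{M}_p^p$, uniformly in $D$. (The martingale cancellation exploited in Lemma \ref{l-technical-lemma} is needed only for the \emph{pointwise convergence rate}, not for the maximal bound.) In the paper this appears as the comparison of the axis lift with the lift of the linear interpolation, with error exactly $\bra{\sum_r\seq{(x_{r+1}-x_r)^2+(y_{r+1}-y_r)^2}}^{p/2}$ because each corner triangle is enclosed at most once; Proposition \ref{p-first-prop} and discrete BDG then finish. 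Your decomposition can be repaired along the same lines --- note also that identifying the left-point It\^o sum with the area of the linear interpolation misses the discrete bracket $\sum_r M^i_{t_r,t_{r+1}}M^j_{t_r,t_{r+1}}$, which must be absorbed by the same squared-increment bound --- but as written the key uniform estimate is not established.
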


We split the proof into two propositions concerning $M^{D,\theta,\lambda}$; the first dealing with $\theta=\lambda$ and the second $\theta\neq \lambda$, where $\theta,\lambda\in\seq{b,f}$. 

\begin{Proposition}\label{p-first-prop}
	For all $p>2$ and $\theta\in\seq{b,f}$, there exists a  constant $C_p>0$ such that 
	\[
		\sup_{D\in\Dd_{[0,1]}} \EE\bra{ \pnorm{S_2\bra{M^{D,\theta,\theta}}}{p}^p} \leq C_p\norm{M}^p_p.
	\]
\end{Proposition}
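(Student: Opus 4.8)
The plan is to reduce, using reparametrisation invariance of the signature and the ``diagonal'' structure of $M^{D,\theta,\theta}$, to a uniform maximal $p$-variation bound for the level-$2$ lift of the piecewise-linear interpolation of $M$ along $D$, and then to control the two levels of that lift by classical martingale inequalities.

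First I would dispose of the reduction. Writing $M^{D,\theta,\theta}=\bra{M^{D,\theta},M^{D,\theta}}\in C\bra{[0,1],\RR^{2d}}$, every $d\times d$ block of the level-$2$ part of $S_2\bra{M^{D,\theta,\theta}}$ equals $\int M^{D,\theta;i}\,dM^{D,\theta;j}$, so the Carnot-Caratheodory norm satisfies $d\bra{S_2(M^{D,\theta,\theta})_{s,t}}\asymp\abs{M^{D,\theta}_{s,t}}+\abs{A^{M^{D,\theta}}_{s,t}}^{1/2}$ with constants depending only on $d$, whence $\pnorm{S_2(M^{D,\theta,\theta})}{p}\le C_d\,\pnorm{S_2(M^{D,\theta})}{p}$. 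Inspecting the definition of the Hoff process shows that, as an \emph{unparametrised} curve, the single component $M^{D,\theta}$ equals the $\RR^d$-valued piecewise-linear interpolation $\widetilde M^D$ of $M_{t_0},\dots,M_{t_n}$ with constant stretches inserted (the lead/lag mechanism freezes one coordinate block while the other moves). Since the signature and its $p$-variation are reparametrisation invariant and insensitive to constant stretches, $\pnorm{S_2(M^{D,\theta})}{p}=\pnorm{S_2(\widetilde M^D)}{p}$, so it suffices to show $\sup_D\EE{\pnorm{S_2(\widetilde M^D)}{p}^p}\le C(p,\norm M_p)$.

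Since $\widetilde M^D$ has bounded variation, $S_2(\widetilde M^D)$ is a genuine geometric rough path, $d\bra{S_2(\widetilde M^D)_{s,t}}^p\asymp\abs{\widetilde M^D_{s,t}}^p+\abs{A^{\widetilde M^D}_{s,t}}^{p/2}$, and $(s,t)\mapsto\norm{S_2(\widetilde M^D)}_{p\textup{-var};[s,t]}^p$ is a control; by super-additivity it is then enough to dominate $d\bra{S_2(\widetilde M^D)_{s,t}}^p$ by one (random) control of expectation $\le C(p,\norm M_p)$, uniformly in $D$. At the first level, each increment $\widetilde M^D_{s,t}$ is dominated by at most three ``cell'' increments of $M$ (left-partial, full middle, right-partial); summing over an arbitrary partition of $[0,1]$ the middle terms contribute $\le\pnorm M p^p$ and the boundary cells that occur are pairwise distinct, so $\pnorm{\widetilde M^D}{p}^p\le c_p\,\pnorm M p^p$ pathwise, and L\'{e}pingle's $p$-variation inequality together with Burkholder-Davis-Gundy (see e.g.\ \cite{FV}) gives $\EE{\pnorm M p^p}\le C_p\EE{\qv M_{0,1}^{p/2}}\le C_p'\,\norm{M}_p^p$. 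At the second level, Chen's relation and the vanishing of the L\'{e}vy area of a straight segment give, for $s<t$ both points of $D$, $A^{\widetilde M^D}_{s,t}=\tfrac12\int_s^t\Phi^{D,s}_r\wedge dM_r$ with $\Phi^{D,s}_r:=M_{s,t_j}$ for $r\in(t_j,t_{j+1}]\subseteq[s,t]$, and for general $s,t$ the same holds up to finitely many Chen corrections of the form $\tfrac12\,v\wedge w$ with $v,w$ increments of $\widetilde M^D$; in particular $A^{\widetilde M^D}_{0,t_k}=N^D_{t_k}$, where $N^D:=\tfrac12\int_0^\cdot\Psi^D\wedge dM$ (with $\Psi^D$ the left-point step process of $M$ along $D$) is a continuous local martingale with $\qv{N^D}_{0,1}\le C\,\bra{\sup_{t}\abs{M_t}}^2\qv M_{0,1}$.

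To close I would fix a partition and expand each $\abs{A^{\widetilde M^D}_{s_k,s_{k+1}}}^{p/2}$ using the decomposition above: the correction terms are controlled by Cauchy-Schwarz exactly as in Lemma \ref{l-technical-lemma}, summing to $\le C_p\,\pnorm{\widetilde M^D}{p}^p$, while the martingale part contributes $\sum_k\abs{N^D_{t'_k,t''_k}}^{p/2}$ over disjoint partition sub-intervals, hence $\le\pnorm{N^D}{p/2}^{p/2}$; and by a martingale variation inequality with Cauchy-Schwarz, $\EE{\pnorm{N^D}{p/2}^{p/2}}\lesssim\EE{\qv{N^D}_{0,1}^{p/4}}\lesssim\EE{\bra{\sup_t\abs{M_t}}^p}^{1/2}\EE{\qv M_{0,1}^{p/2}}^{1/2}\le C\,\norm{M}_p^p$. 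The one place where genuine care is needed --- and the main obstacle --- is exactly this last step: passing from a \emph{fixed-partition} estimate (a Burkholder plus Cauchy-Schwarz computation of the flavour of Lemmas \ref{l-holder-play} and \ref{l-technical-lemma}) to a true $p/2$-variation bound, i.e.\ control of the \emph{supremum over all sub-partitions} of the area increments, uniformly in the sampling $D$. For $p>4$ this is L\'{e}pingle's inequality applied to $N^D$; for $p\in(2,4]$ the variation index $p/2$ is $\le 2$, L\'{e}pingle does not apply to $N^D$ directly, and one must instead exploit the representation of $N^D$ as a stochastic integral against $M$ with integrand bounded by $\sup_t\abs{M_t}$ --- equivalently, upgrade $\EE{\abs{A^{\widetilde M^D}_{s,t}}^{p/2}}\lesssim\EE{\qv M_{s,t}^{p/2}}$ to an estimate that beats a control and run a Kolmogorov/Garsia-Rodemich-Rumsey-type argument --- thereby transferring the $p/2$-variation of the area back to the $p$-variation of $M$ plus lower-order terms. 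Combining the two levels dominates $d\bra{S_2(\widetilde M^D)_{s,t}}^p$ by a control of the required expectation, and the Proposition follows by super-additivity; the first and last intervals of $X^D$ contribute only lower-order terms, absorbed into the same estimates.
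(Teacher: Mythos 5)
Your reduction step is exactly the paper's: note that $M^{D,\theta,\theta}$ (equivalently each pair $\bra{M^{D,\theta;i},M^{D,\theta;j}}$) is, up to the diagonal embedding and a reparameterization with constant stretches, the standard piecewise-linear interpolation $\tilde{M}^D$ of $M$ along $D$, and $p$-variation of the level-$2$ lift is invariant under reparameterization. Where you diverge is what to do next: the paper simply invokes the rough-path (enhanced) Burkholder--Davis--Gundy inequality \cite[Theorem 14.15]{FV} with moderate function $F(x)=x^p$, which gives $\EE{\pnorm{S_2\bra{\tilde{M}^D}}{p}^p}\leq C\,\EE{\qv{M}_{0,1}^{p/2}}$ uniformly in $D$, and is done. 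You instead attempt to reprove this uniform bound by hand, and that is where there is a genuine gap.

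The problem is your treatment of the area in the range $p\in(2,4]$, which is precisely the regime that matters for semimartingales. Writing $A^{\tilde{M}^D}_{t_a,t_b}=N^D_{t_a,t_b}-\tfrac{1}{2}M_{0,t_a}\wedge M_{t_a,t_b}$ and bounding the martingale part by $\pnorm{N^D}{p/2}^{p/2}$ cannot work: a non-constant continuous local martingale has almost surely \emph{infinite} $q$-variation for every $q\leq 2$, so $\pnorm{N^D}{p/2}$ is a.s.\ infinite when $p\leq 4$; likewise the Chen correction contributes terms of size $\sup_t\abs{M_t}^{p/2}\sum_k\abs{M_{s_k,s_{k+1}}}^{p/2}$, whose supremum over partitions is the $(p/2)$-variation of $M$, again a.s.\ infinite. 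Only the \emph{combination} of the two pieces is summable to the power $p/2$, so the decomposition destroys exactly the cancellation you need. Your proposed repair for $p\in(2,4]$ --- upgrading the moment bound $\EE{\abs{A^{\tilde{M}^D}_{s,t}}^{p/2}}\lesssim\EE{\qv{M}_{s,t}^{p/2}}$ via a Kolmogorov/Garsia--Rodemich--Rumsey argument --- is only gestured at, and it does not go through as stated: the right-hand side is a random control (there is no deterministic modulus $\abs{t-s}^{\text{const}}$ available for a general martingale), so GRR/Kolmogorov gives nothing uniform in $D$ without an additional time-change or stopping-time argument. Carrying that out correctly is essentially the proof of the enhanced BDG theorem itself, so the honest route is the paper's: cite \cite[Theorem 14.15]{FV} (or reproduce its proof), after the reparameterization reduction you already have. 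Your level-$1$ estimate (pathwise domination of $\pnorm{\tilde{M}^D}{p}$ by $\pnorm{M}{p}$ plus L\'{e}pingle/BDG) is fine, and your handling of the diagonal embedding and of the first and last intervals is correct, but as written the level-$2$ bound is not proved for $2<p\leq 4$.
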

\begin{proof}
	For all $\theta\in \seq{b,f}$, $M^{D,\theta}$ is a reparameterization of the standard piecewise linear approximation $\tilde{M}^D$ of $M$ based on the partition $D$. Since $p$-variation is invariant under reparameterization, it follows from the Burkholder-Davis-Gundy rough path result of  \cite[Theorem 14.15]{FV}, (using the moderate function $F(x)=x^p$), that we have 
	\begin{align*}
		 \EE\bra{ \pnorm{S_2\bra{M^{D,\theta}}}{p}^p} &= \EE\bra{ \pnorm{S_2\bra{\tilde{M}^D}}{p}^p}
		 \leq C_p \EE\bra{ \qv{M}^{p/2}_{0,1}} \leq C_p\norm{M}_p^p.
	\end{align*}
	The proof is complete. 
\end{proof}

\begin{Remark}
	The use of \cite[Theorem 14.15]{FV} in the proof of Proposition \ref{p-first-prop} critically requires that $p>2$.
\end{Remark}

We now consider the case of $\theta\neq \lambda$. As in the proof of Proposition \ref{p-uniform-convergence}, by symmetry it suffices to only consider the case of $\theta=b\neq \lambda=f$. 

\begin{Proposition}\label{p-maximal-bf}
	For all $p>2$, there exists a constant $C_p>0$ such that 
	\[
		\sup_{D\in\Dd_{[0,1]}} \EE\bra{ \pnorm{S_2\bra{M^{D,b,f}}}{p}^p} \leq C_p\norm{M}_p^p.
	\]
\end{Proposition}
\begin{proof}
Since Proposition \ref{p-first-prop} has already established that 
\[
\sup_{D\in\Dd_{[0,1]}} \EE\bra{\norm{\pi_1\bra{S_2\bra{M^{D,b,f}}}}^p_{p\textup{-var;}[0,1]}} \leq C_p \norm{M}_p^p,
\]
it is sufficient to show that for all $D\in\Dd_{[0,1]}$, there exists a constant $C_p$ independent of $D$ such that 
\[
\EE\bra{\sup_{P\in \Dd_{[0,1]}} \sum_{s_j \in P} \abs{A^{D,b,f;i,j}_{s_j,s_{j+1}}}^{p/2}} 
= \EE\bra{ \abs{A^{D,b,f;i,j}}_{p/2\textup{-var;}[0,1]}^{p/2}}
\leq C_p \norm{M}_p^p.
\] 
We begin by fixing a partition $D=\{t_k\}_{k=0}^n\in\Dd_{[0,1]}$. As done previously, let $X:=M^{i}$ and $Y:=M^{j}$. 
By summing up the absolute area of the triangles enclosing the piecewise axis path $(M^{D,b;i},M^{D,f;j})$ over each interval $[\frac{2k}{2n},\frac{2k+1}{2n}]$, we find that 
\begin{align*}
\abs{A^{D,b,f;i,j}}_{p/2\textup{-var;}[0,1]} 
&\leq \abs{A^{D,b,f;i,j}}_{1\textup{-var;}[0,1]}\\
&= \sum_{k=0}^{n-1} \abs{A^{D,b,f;i,j}}_{1\textup{-var;}\l[\frac{2k}{2n},\frac{2k+2}{2n}\r]}
= \frac{1}{2}\sum_{t_k\in D} \abs{X_{t_k,t_{k+1}}} \cdot \abs{Y_{t_{k+1},t_{k+2}}}.
\end{align*}
Therefore, using the inequalities $ab\leq \frac{1}{2}(a^2+b^2)$ and $(a+b)^{r} \leq 2^{r-1}(a^r + b^r)$ for $a,b\geq 0$, $r\geq 1$, we find
\begin{align*}
\EE\bra{ \abs{A^{D,b,f;i,j}}_{p/2\textup{-var;}[0,1]}^{p/2}}
&\leq 
 \EE\bra{ \abs{A^{D,b,f;i,j}}_{1\textup{-var;}[0,1]}^{p/2}}\\
&= 2^{-p/2}\EE\bra{\abs{\sum_{t_k \in D} \abs{X_{t_k,t_{k+1}}} \cdot \abs{Y_{t_{k+1},t_{k+2}}}}^{p/2}}\\
&\leq \frac{1}{2}\EE\bra{\abs{\sum_{t_k\in D} \abs{X_{t_k,t_{k+1}}}^2}^{p/2}} + 
\frac{1}{2}\EE\bra{\abs{\sum_{t_k\in D} \abs{Y_{t_k,t_{k+1}}}^2}^{p/2}}. 
\end{align*}
From the discrete Burkholder-Davis-Gundy inequality (\cite[Theorem 1.1]{burkholder1972integral}), followed by the continuous BDG inequality, we deduce that 
\begin{align*}
\EE\bra{\abs{\sum_{t_k\in D} \abs{X_{t_k,t_{k+1}}}^2}^{p/2}}
\leq C_p\EE\bra{\sup_{t_k \in D} \abs{X_{t_k}-X_0}^p}
&\leq C_p\EE\bra{\sup_{t\in [0,1]} \abs{X_{t}-X_0}^p}\\
&\leq C_p\EE\bra{\l<X\r>_{0,1}^{p/2}} \leq C_p\norm{M}_p^p.
\end{align*}
An identical inequality holds for the sum of squares term involving $Y$. As this bound is independent of the original partition choice of $D$, the proof is complete. 
\end{proof}

\begin{Remark}
Instead of a martingale, suppose our underlying signal $X:[0,1]\to\RR$ is a fractional Brownian motion with Hurst parameter $H<\frac{1}{2}$. Then as $p>2$, 
\begin{align*}
\abs{ \sum_{t_k\in D} \abs{X_{t_k,t_{k+1}}}^2}_{L^{p/2}} \geq  
\abs{ \sum_{t_k\in D} \abs{X_{t_k,t_{k+1}}}^2}_{L^{1}}  &= \EE\bra{\sum_{t_k\in D} \abs{X_{t_k,t_{k+1}}}^2}
 = \sum_{t_k \in D} \abs{t_{k+1}-t_k}^{2H} \to \infty
\end{align*}
as $\abs{D}\to 0$. Thus the Burkholder-Davis-Gundy-based argument in the proof of Proposition \ref{p-maximal-bf}, (used to bound the individual area triangles enclosed by the Hoff path), cannot be generalized to the case where the underlying signal is a fractional Brownian motion (at least when $H<\frac{1}{2}$). This is the first obstacle we encounter when attempting to extend the convergence results of Theorems \ref{t-hoff-process-convergence} and \ref{t-ito-sde} to this non-semimartingale situation. Perhaps other techniques exploiting the properties of Gaussian rough paths are needed (see \cite[Chapter XV]{FV}), but the authors are uncertain on whether such an extension is even possible; the quadratic variation of fractional Brownian motion with Hurst index $H<\frac{1}{2}$ is infinite (\cite[\S2]{rogers1997arbitrage}). 
\end{Remark}

\section*{Acknowledgements}

The authors would like to thank Prof.~Mike Giles, Ni Hao, Sean Ledger and Weijun Xu for their helpful comments along with Prof.~Zhongmin Qian and Danyu Yang for reading an earlier draft. Thanks must also go to Lajos Gyurk\'{o} for helping the first author write a Python script to produce Figure \ref{f-hoff-greg}.
The research is supported by the European Research Council under the European Union's
Seventh Framework Programme (FP7-IDEAS-ERC, ERC grant agreement nr. 291244). 
The authors are grateful for the support of the Oxford-Man Institute.

\bibliographystyle{plain}
\bibliography{hoff_library}

\end{document}